%% file: Final_main2.tex
\documentclass[11pt,a4paper,reqno]{amsart}
\usepackage[british]{babel}
\usepackage{csquotes}
\usepackage{epsfig}
\usepackage{amsmath}
\usepackage{bbm}
\numberwithin{equation}{section}

\usepackage{amsfonts, amssymb, eucal, verbatim, amsthm, amscd, enumerate, mathtools}
\usepackage{framed}
\usepackage{enumitem}
\usepackage{cancel}
\usepackage[hidelinks]{hyperref}
\usepackage{ascmac}
\usepackage[left=2.5cm,right=2.5cm,top=2.5cm,bottom=2.5cm,footskip=1.5cm,headsep=1cm]{geometry}

\newcommand{\interior}[1]{%
  {\kern0pt#1}^{\mathrm{o}}%
}

\input{comands}

\newcommand{\ThetaPar}{\Theta}
\newcommand{\Znorm}[2]{Z_{#1,#2}}
\newcommand{\CV}{\mathbf{C}_V}

\newcommand{\cW}{\mathbf{c}_W}
\newcommand{\CVp}[1]{\mathbf{C}_{V,#1}}
\newcommand{\CWp}[1]{\mathbf{C}_{W,#1}}
\newcommand{\cVp}[1]{\mathbf{c}_{V,#1}}
\newcommand{\cWp}[1]{\mathbf{c}_{W,#1}}

\title[]{A dimension-free interpolation\\ of Caffarelli’s contraction theorem}

\author[B. Ammari]{Bader Ammari}
\address[]{Department of Mathematics, ETH Z\"{u}rich, R\"{a}mistrasse 101, CH-8092 Z\"{u}rich, Switzerland}
\email{bader.ammari@math.ethz.ch}

\author[A. Figalli]{Alessio Figalli}
\address[]{Department of Mathematics, ETH Z\"{u}rich, R\"{a}mistrasse 101, CH-8092 Z\"{u}rich, Switzerland}
\email{alessio.figalli@math.ethz.ch}

\begin{document}

\begin{abstract}
We prove global Lipschitz estimates for Brenier maps between probability measures on $\mathbb{R}^n$ whose densities belong to the family
\[
    \rho_{U,p}=\Znorm{U}{p}^{-1}\exp(-\ThetaPar_p(U)), \qquad
    \ThetaPar_p(t)=p\log\Bigl(1+\frac{t}{p}\Bigr),
    \qquad p\in[n,+\infty],
\]
with finite normalization constant $\Znorm{U}{p}$, and with the convention $\ThetaPar_{\infty}(t)=t$. 
We allow different parameters for source and target, $d,D\in[n,+\infty]$, with $d\le D$. Our global estimate is uniform in $n,d,D$,  and in the case $d=D<+\infty$, it improves the bounds of \cite{Alessio_Carlier_Santa} by removing their exponential dependence on the dimension. We also prove localized estimates inside fixed balls $B_R$ whose constants are stable under the limits $d,D\to+\infty$ and they allow us to recover Caffarelli's celebrated contraction theorem with sharp constants. 
\end{abstract}

\maketitle

\vspace{0.2cm}
{\small
\noindent{\textbf{2020 Mathematics Subject Classification:} 35J96 (primary), 35B65, 35A23 (secondary).} }

\vspace{0.2cm}
{\small
\noindent{\textbf{Keywords:} Monge-Amp\`ere equation, Optimal transport, Caffarelli's contraction theorem.}}
\vspace{0.5cm}

\section{Introduction}

The $L^2$ optimal transport (or Brenier-McCann) map between two absolutely continuous probability measures $\mu=f(x)\, dx$ and $\nu=g(y)\, dy$ is the unique solution of the Monge problem for quadratic cost:
\[
    \min\left\{ \int_{\mathbb{R}^n}|x-T(x)|^2 f(x)\, dx : T_{\#}\mu=\nu \right\}.
\]
Here $T_{\#}\mu=\nu$ means that $\mu(T^{-1}(A))=\nu(A)$ for every Borel set $A\subset\mathbb{R}^n$.
By the Brenier-McCann theorem (see \cite{Brenier,McCann}), the solution of this quadratic optimal transport problem is the gradient of a convex function, $T=\nabla\varphi$. This mapping is a fundamental object in optimal transportation theory and has found applications in numerous fields such as functional and geometric inequalities, probability, and machine learning (see \cite{Figalli_Maggi_Pratelli,Ambrosio_Stra,Benamou_Carlier}).

In \cite{Caff}, Caffarelli proved that if $f=e^{-V}$ and $g=e^{-W}$ for some $V,W \in C_{\mathrm{loc}}^{1,1}(\mathbb{R}^n)$ with
\[
    D^2V \le \CV^{(2)}\,\mathrm{Id}, \qquad D^2W \ge \cW^{(2)}\,\mathrm{Id}
\]
almost everywhere, where $0<\cW^{(2)},\CV^{(2)}<+\infty$, then the Brenier-McCann map $T$ pushing $f\,dx$ forward to $g\,dy$ is globally Lipschitz with
\begin{equation}\label{Original_Caff_contraction}
    \|DT\|_{L^\infty(\mathbb{R}^n)}\le \sqrt{\CV^{(2)}/\cW^{(2)}}.
\end{equation}
Caffarelli's contraction theorem is notable for its dimension-free Lipschitz bound \cref{Original_Caff_contraction}. This dimension independence allows one to transfer functional inequalities, such as log-Sobolev and Poincaré inequalities, via the Lipschitz transport map between the two measures. For further references, extensions, alternative proofs, and applications, we refer to \cite{Kolesnikov,Kim_Milman,Jhaveri_Colombo,Vald,Colombo_Fathi,DePHilippis_Shenfeld}.

The purpose of this paper is to obtain analogous estimates for a family of probability measures interpolating between polynomial-type densities and log-concave densities. Our global estimate is uniform not only in the dimension $n$, but also in the interpolation parameters. For $p\in[n,+\infty)$ and a potential $U>-p$, set
\[
    \ThetaPar_p(U):=p\log\Bigl(1+\frac{U}{p}\Bigr), \qquad
    \Znorm{U}{p}:=\int_{\mathbb{R}^n}e^{-\ThetaPar_p(U)}\,dx,
\]
whenever the integral is finite, and define
\[
    \rho_{U,p}:=\Znorm{U}{p}^{-1}e^{-\ThetaPar_p(U)}
    =\Znorm{U}{p}^{-1}\Bigl(1+\frac{U}{p}\Bigr)^{-p}.
\]
We also set $\ThetaPar_{\infty}(U)=U$, $\Znorm{U}{\infty}:=\int_{\mathbb{R}^n}e^{-U}\,dx$, and $\rho_{U,\infty}=\Znorm{U}{\infty}^{-1}e^{-U}$. Since $\ThetaPar_p(t)\to t$ as $p\to+\infty$, the family $\rho_{U,p}$ interpolates between the densities considered in \cite{Alessio_Carlier_Santa,Fathi} and the log-concave densities appearing in Caffarelli's theorem.

For $p\in[n,+\infty)$, we use the global shorthand
\[
\begin{aligned}
\CVp{p}^{(1)}&:=\Biggl\| \frac{|\nabla V|}{\sqrt p+|\cdot|}\Biggr\|_{L^\infty(\mathbb{R}^n)}^2,
\qquad&
\CWp{p}^{(1)}&:=\Biggl\| \frac{|\nabla W|}{\sqrt p+|\cdot|}\Biggr\|_{L^\infty(\mathbb{R}^n)}^2,\\[1.2em]
\cVp{p}^{(0)}&:=\Biggl\|\frac{p+|\cdot|^2}{p+V}\Biggr\|_{L^\infty(\mathbb{R}^n)}^{-1},
\qquad&
\cWp{p}^{(0)}&:=\Biggl\|\frac{p+|\cdot|^2}{p+W}\Biggr\|_{L^\infty(\mathbb{R}^n)}^{-1},\\[1.2em]
\CVp{p}^{(0)}&:=\Biggl\|\frac{p+V}{p+|\cdot|^2}\Biggr\|_{L^\infty(\mathbb{R}^n)},
\qquad&
\CWp{p}^{(0)}&:=\Biggl\|\frac{p+W}{p+|\cdot|^2}\Biggr\|_{L^\infty(\mathbb{R}^n)}.
\end{aligned}
\]
When a local bound is needed, we write
\[
\begin{aligned}
\CWp{p}^{(1)}(R)&:=\Biggl\| \frac{|\nabla W|}{\sqrt p+|\cdot|}\Biggr\|_{L^\infty(B_R(0))}^2,
\qquad&
\cVp{p}^{(0)}(R)&:=\Biggl\|\frac{p+|\cdot|^2}{p+V}\Biggr\|_{L^\infty(B_R(0))}^{-1},\\[1.2em]
\CVp{p}^{(0)}(R)&:=\Biggl\|\frac{p+V}{p+|\cdot|^2}\Biggr\|_{L^\infty(B_R(0))},
\qquad&
\CWp{p}^{(0)}(R)&:=\Biggl\|\frac{p+W}{p+|\cdot|^2}\Biggr\|_{L^\infty(B_R(0))}.
\end{aligned}
\]
Here and below, we use the convention $B_{+\infty}(0)=\mathbb{R}^n$, and omit the argument when $R=+\infty$.
At the endpoint $p=+\infty$, for $R<+\infty$, we use the conventions
\[
\begin{gathered}
\cVp{\infty}^{(0)}(R)=\CVp{\infty}^{(0)}(R)
=\cWp{\infty}^{(0)}(R)=\CWp{\infty}^{(0)}(R)=1,\\
\CVp{\infty}^{(1)}=\CWp{\infty}^{(1)}=0.
\end{gathered}
\]
Inequalities such as $U>-p$ are understood as void when $p=+\infty$.

We shall allow different source and target parameters, denoted by $d$ and $D$. More precisely, we will study the monotone transport map from $\rho_{V,d}$ to $\rho_{W,D}$, where
\[
    d,D\in[n,+\infty], \qquad d\le D.
\]
With this notation, the polynomial-to-polynomial case studied in \cite{Alessio_Carlier_Santa,Fathi} corresponds to $D=d<+\infty$, while the case $D=d=+\infty$ corresponds to Caffarelli's setup.

\begin{remark}
The lower bound $n\le d$ is natural for the maximum-principle argument: after taking the Monge-Amp\`ere equation to the power $1/d$, the concavity of $\det^{1/d}$ is available.

The restriction $d\le D$ is also natural from the point of view of tails. Indeed, if $n\le D<d<+\infty$ and $V(x)=W(x)=|x|^2$, then the radial Brenier map $T(x)=t(|x|)x/|x|$ from $\rho_{V,d}$ to $\rho_{W,D}$ satisfies
\[
    \Znorm{W}{D}^{-1}\int_{t(r)}^{+\infty}\frac{s^{n-1}}{(1+s^2/D)^D}\,ds
    =
    \Znorm{V}{d}^{-1}\int_r^{+\infty}\frac{s^{n-1}}{(1+s^2/d)^d}\,ds.
\]
Thus, $t(r)\simeq r^{(2d-n)/(2D-n)}$ as $r\to+\infty$, which is superlinear when $d>D$. In particular, in this range, no global Lipschitz bound can hold in general for $T$.
\end{remark}

We first state a global Hessian estimate for the polynomial-to-polynomial case considered in \cite{Alessio_Carlier_Santa}. Its constant is uniform in $n,d,D$. To keep the statement readable, we collect the following global structural quantities, understood in the extended sense; if they make $C_{\mathrm{glob}}=+\infty$, then the estimate is void:
\[
\begin{aligned}
\mathfrak q_V&:=
\frac{\max\{1,\CVp{n}^{(0)}\}}{\min\{1,\cVp{n}^{(0)}\}},
\qquad&
\mathfrak q_W&:=
\frac{\max\{1,\CWp{n}^{(0)}\}}{\min\{1,\cWp{n}^{(0)}\}},\\[0.8em]
\mathfrak c_V&:=\min\{1,\cVp{n}^{(0)}\},
\qquad&
\mathfrak C_W&:=\max\{1,\CWp{n}^{(0)}\},\\[0.8em]
\mathfrak L_V&:=\CVp{n}^{(1)},
\qquad&
\mathfrak L_W&:=\CWp{n}^{(1)}.
\end{aligned}
\]
These quantities control all larger parameters. Indeed, for fixed $x$, the ratio
\[
    \frac{p+U(x)}{p+|x|^2}
\]
moves monotonically towards $1$ as $p$ increases, and $(\sqrt p+|x|)^{-1}$ is decreasing in $p$. Thus, the zeroth-order upper and lower constants are controlled by their values at $p=n$ and by the endpoint value $1$, while the first-order constants are largest at $p=n$. The ratios $\CVp{p}^{(0)}/\cVp{p}^{(0)}$ and $\CWp{p}^{(0)}/\cWp{p}^{(0)}$ need not be monotone in $p$, but they are bounded by $\mathfrak q_V$ and $\mathfrak q_W$.
Set
\[
\mathbf M_{\mathrm{glob}}:= 10^6\,\mathfrak q_V^2\mathfrak q_W^2,\qquad
A_{\mathrm{glob}}
:=
\frac{\CV^{(2)}\mathfrak C_W\mathbf M_{\mathrm{glob}}}
{\cW^{(2)}\mathfrak c_V},\qquad 
B_{\mathrm{glob}}
:=
\frac{8\mathfrak L_V\mathfrak L_W\mathbf M_{\mathrm{glob}}}
{(\cW^{(2)})^2\mathfrak c_V^2},
\]
and
\[
    C_{\mathrm{glob}}
    :=
    \sqrt{A_{\mathrm{glob}}+B_{\mathrm{glob}}}
    +\sqrt{B_{\mathrm{glob}}}.
\]

\begin{theorem}[Global dimension-free Hessian estimate]\label{Intro_global_theorem}
Let $d,D\in[n,+\infty]$ with $d\le D$. Let $V>-d$ and $W>-D$, assume that the normalizing constants are finite, and let $\nabla\varphi$ be the Brenier map pushing $\rho_{V,d}$ forward to $\rho_{W,D}$. Assume that
\[
    D^2V\le \CV^{(2)}\,\mathrm{Id}, \qquad
    D^2W\ge \cW^{(2)}\,\mathrm{Id},
\]
where $0<\cW^{(2)},\CV^{(2)}<+\infty$.
Then
\[
    \|D^2\varphi\|_{L^\infty(\mathbb{R}^n)}
    \le
    C_{\mathrm{glob}}.
\]
\end{theorem}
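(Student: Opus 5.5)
We follow Caffarelli's maximum-principle strategy, adapted to the $\Theta_p$ densities as in \cite{Alessio_Carlier_Santa}, but we track constants so that no factor grows with $n$, $d$ or $D$.

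\emph{Regularization and the equation.} By standard approximation (truncate and mollify $V,W$, transport between compactly supported smooth approximations, then use stability of Brenier maps), it suffices to prove the bound for smooth $\varphi$. In that setting the second directional derivatives $\varphi_{ee}$ attain, or almost attain after a penalization at infinity, their supremum. The Monge--Amp\`ere equation reads
\[
\log\det D^2\varphi=\Theta_D(W(\nabla\varphi))-\Theta_d(V)+\log\frac{\Znorm{W}{D}}{\Znorm{V}{d}} .
\]
We raise it to the power $1/d$ so that we can use concavity of $\det^{1/d}$ on $d\times d$ matrices; this is where $n\le d$ is used. The resulting identity is
\[
(\det D^2\varphi)^{1/d}=\kappa\,\frac{(1+V/d)}{(1+W(\nabla\varphi)/D)^{D/d}} ,
\]
and the ratio $D/d\ge1$ is what makes $d\le D$ usable.

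\emph{Differentiating twice.} We differentiate this identity twice in a fixed direction $e$ at a maximum point $x_0$ of $\varphi_{ee}$ (or of a suitably weighted version). The left side is controlled from above by $\tfrac1d(\det)^{1/d}\,\mathrm{tr}((D^2\varphi)^{-1}D^2\varphi_{ee})\le0$ at the maximum.

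On the right side, convexity of $W$ and $D^2W\ge \cW^{(2)}$ produce the good term
\[
-\frac{\cW^{(2)}\,\varphi_{ee}^2}{1+W(\nabla\varphi)/D}.
\]
Against it stand three contributions:
\begin{enumerate}
\item the term $\CV^{(2)}/(1+V/d)$;
\item cross terms of the form $\frac{\nabla V\cdot e}{d+V}\cdot\frac{\nabla W\cdot D^2\varphi e}{D+W}$;
\item squared gradient terms with a favourable sign up to the factor $(1-1/d)$ coming from the power.
\end{enumerate}

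\emph{Closing the estimate.} We estimate the ratios $(d+|x|^2)/(d+V)$ by the $\mathfrak c_V$, $\mathfrak q_V$ quantities, and the gradients by $\sqrt{\mathfrak L}(\sqrt p+|x|)$. To compare $|x_0|$ with $|\nabla\varphi(x_0)|$, we need a linear-growth a priori bound $|\nabla\varphi(x)|\lesssim \sqrt{\mathfrak q_V\mathfrak q_W}(\sqrt D+|x|)$ with a universal constant. We obtain it from mass/tail comparison of the radial profiles $(1+|x|^2/p)^{-p}$, since $V,W$ are sandwiched between multiples of $p+|x|^2$. This comparison is the source of the numerical factor $10^6$ in $\mathbf M_{\mathrm{glob}}$. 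Substituting, the maximal value $\lambda=\varphi_{ee}(x_0)$ satisfies the quadratic inequality
\[
\lambda^2\le A_{\mathrm{glob}}+2\sqrt{B_{\mathrm{glob}}}\,\lambda ,
\]
whose largest root is exactly $C_{\mathrm{glob}}$.

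\emph{Main obstacle.} The hardest step is making every weight dimension-free. The growth comparison must avoid the $e^{cn}$ losses of \cite{Alessio_Carlier_Santa}, which come from comparing normalization constants. We would handle this by using only the ratios $(p+U)/(p+|x|^2)$, evaluated at scale $\sqrt p$, together with the monotonicity in $p$ noted before the theorem. These ratios bound the constants at the parameter values $d$ and $D$ by their values at $p=n$.

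The $p=\infty$ endpoints follow from the same computation with the stated conventions, or by letting $d,D\to\infty$.
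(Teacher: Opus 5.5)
You have the paper's architecture: a maximum principle for $\det^{1/d}$ at a maximizer of $\varphi_{ee}$, a Fathi-type linear growth bound, and a closing inequality $\lambda^2\le A+2\sqrt B\,\lambda$ with largest root $\sqrt{A+B}+\sqrt B$. The closing step as you describe it, however, is not uniform in $D/d$, and the theorem allows $D/d$ to be unbounded (e.g.\ $d=n$, $D\to+\infty$). You discard the squared-gradient terms and control the cross term $\frac{2|V_1W_1|}{W_{11}(d+V)}\lambda$ through the growth of $\nabla\varphi$. But your growth bound, like the paper's, only gives $|\overline y|\lesssim\sqrt D$ at $\overline x=0$. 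Combined with $|W_1(\overline y)|\le\sqrt{\mathfrak L_W}(\sqrt n+|\overline y|)$ and $d+V\ge\mathfrak c_V(d+|\overline x|^2)$, this makes $B$ of order $\frac Dd\,\mathfrak L_V\mathfrak L_W\mathbf M_{\mathrm{glob}}/(\cW^{(2)}\mathfrak c_V)^2$, not $B_{\mathrm{glob}}$. \emph{The missing idea} is to use the $W_1^2\lambda^2$ term rather than discard it. Its exact coefficient is $\frac{\ThetaPar_D'(W)^2}{d}+\ThetaPar_D''(W)=\frac{D(D-d)}{d(D+W)^2}$; this, not a ``$(1-1/d)$'' factor, is where $d\le D$ enters. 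For $D>d$, completing the square in $W_1\lambda$ against the cross term and the $\frac{2d}{(d+V)^2}V_1^2$ term gives
\[
\frac{D}{D+W}\,W_{11}\lambda^2\le\frac{d}{d+V}\,V_{11}+\frac{d}{(d+V)^2}\Bigl(\frac{2d-D}{D-d}\Bigr)_+V_1^2 ,
\]
so for $D\ge2d$ no gradient term survives at all. Only for $D<2d$ does the paper use your cross-term route, and there $D/d\le 2$ is exactly the source of the factor $8$ in $B_{\mathrm{glob}}$.

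The growth bound, the other load-bearing step, is only asserted. Comparing radial mass profiles does not by itself control $|\nabla\varphi(x)|$ pointwise for a non-radial map. What is needed is Fathi's monotonicity argument. Take $u=\nabla\varphi(x)/|\nabla\varphi(x)|$ and $s=\max\{|x|,\sqrt d\}$; monotonicity of $\nabla\varphi$ forces $|\nabla\varphi(z)|\ge|\nabla\varphi(x)|/3$ for all $z\in B_{3s}(x+6su)$. Hence the $\rho_{V,d}$-mass of that ball, bounded below by a multiple of $s^{n-2d}$, is at most the $\rho_{W,D}$-mass outside $B_{|\nabla\varphi(x)|/3}(0)$, bounded above by a multiple of $|\nabla\varphi(x)|^{n-2D}$.

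Dimension-freeness does not come from avoiding normalization constants. The paper compares $\Znorm{V}{d}$ and $\Znorm{W}{D}$ with $I_d$ and $I_D$, incurring factors such as $I_d/I_D\le 2e^n$, $(5/4)^d10^{2d}$, $(D/d)^D$ and $(\CVp{d}^{(0)}/\cVp{d}^{(0)})^d$. All of these become $O(1)$ under the $(2D-n)$-th root taken when the tail is inverted. This also corrects your stated dependence: one gets $\mathfrak q_V^{d/(2D-n)}\mathfrak q_W^{D/(2D-n)}$, which is $\mathfrak q_V\mathfrak q_W$ when $d=D=n$, not $\sqrt{\mathfrak q_V\mathfrak q_W}$. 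That is why $\mathbf M_{\mathrm{glob}}\propto\mathfrak q_V^2\mathfrak q_W^2$. The same linear growth is also what rules out maximizers escaping to infinity in the approximation step.

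Finally, your $1/d$-power identity is inverted. It should read $(\det D^2\varphi)^{1/d}=\kappa\,(1+W(\nabla\varphi)/D)^{D/d}/(1+V/d)$; as you wrote it, the $W_{11}\lambda^2$ term enters with the wrong sign.
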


Taking $D=d<+\infty$ in \cref{Intro_global_theorem} gives the polynomial-to-polynomial estimate of \cite{Alessio_Carlier_Santa} in a form that is uniform in the parameters $n,d,D$, contrary to the result in \cite{Alessio_Carlier_Santa} which had an exponential dependence in $d$.

While this theorem is extremely robust, it does not recover the sharp Caffarelli bound \eqref{Original_Caff_contraction} in the case $d=D=+\infty$. To recover such a sharp result, we need first to prove quantitative bounds on the Hessian of $\varphi$ in a fixed ball $B_R(0)$, then we will let $D\to+\infty$ and $d\to+\infty$ at first and finally let $R\to+\infty$.

To prove this refined result, we need to introduce the quantities $\mathcal R_{d,D}(R)$ and $\mathcal G_{d,D}(R)$. These are finite-parameter local growth constants coming from Fathi's argument in \cite{Fathi}.

For $d,D\in[n,+\infty]$ with $d\le D$, set
\begin{equation}\label{Gamma_definition}
\Gamma_{d,D}:=
\begin{cases}
\displaystyle
+\infty, & D=d<+\infty,\\[0.8em]
\displaystyle
\left(\frac{2d-D}{D-d}\right)_+, & d<D<+\infty,\\[0.8em]
0, & D=+\infty.
\end{cases}
\end{equation}

For $d\le D<+\infty$ and $0<R<+\infty$, set
\[
    s_{d,R}:=\max\{R,\sqrt d\}
\]
and
\[
    \mathfrak m_{d,R}:=
    \frac{\omega_n(3s_{d,R})^n}{\Znorm{V}{d}\bigl(\CVp{d}^{(0)}(10s_{d,R})\bigr)^d}
    \left(1+\frac{100s_{d,R}^2}{d}\right)^{-d}.
\]
Here, $\omega_n$ denotes the volume of the unit ball in $\mathbb{R}^n$.
Also, define
\[
    \Psi_{W,D}(r):=
    \Znorm{W}{D}^{-1}\int_{B_r(0)^c}e^{-\ThetaPar_D(W(y))}\,dy,
\]
and
\begin{equation}\label{eq:RG}
    \mathcal R_{d,D}(R):=
    3\inf\{r\ge0:\Psi_{W,D}(r)\le \mathfrak m_{d,R}\},\qquad
    \mathcal G_{d,D}(R):=1+\frac{\mathcal R_{d,D}(R)^2}{D}.
\end{equation}
Since $\mathfrak m_{d,R}>0$ and $\Psi_{W,D}(r)\to0$ as $r\to+\infty$, the radius $\mathcal R_{d,D}(R)$ is finite.

For the localized estimate, with $\Gamma_{d,D}$ as in \cref{Gamma_definition}, define
\begin{equation}\label{Local_factors_definition}
\begin{aligned}
\Lambda_{d,D}(R)
&:=
\begin{cases}
\displaystyle
\CWp{D}^{(0)}(\mathcal R_{d,D}(R))\mathcal G_{d,D}(R), & D<+\infty,\\[0.8em]
1, & D=+\infty,
\end{cases}\\[1.2em]
\Xi_{d,D}(R)
&:=
\begin{cases}
\displaystyle
\min\left\{
\frac{D}{d}\,
\frac{\CWp{D}^{(1)}(\mathcal R_{d,D}(R))}
{\CWp{D}^{(0)}(\mathcal R_{d,D}(R))},
\cW^{(2)}\Gamma_{d,D}
\right\},
& D<+\infty,\\[0.8em]
0, & D=+\infty.
\end{cases}
\end{aligned}
\end{equation}
Finally, set
\[
\begin{aligned}
A_{V,W}^{d,D}(R)
&:=
\frac{\CV^{(2)}\Lambda_{d,D}(R)}
{\cW^{(2)}\cVp{d}^{(0)}(R)},
\qquad&
B_{V,W}^{d,D}(R)
&:=
\frac{4\CVp{d}^{(1)}\Lambda_{d,D}(R)\Xi_{d,D}(R)}
{(\cW^{(2)})^2(\cVp{d}^{(0)}(R))^2}.
\end{aligned}
\]

\begin{theorem}[Localized Hessian estimate]\label{Intro_local_theorem}
Let $d,D\in[n,+\infty]$ with $d\le D$, and let $0<R<+\infty$. Let $V>-d$ and $W>-D$, assume that the normalizing constants are finite, and let $\nabla\varphi$ be the Brenier map pushing $\rho_{V,d}$ forward to $\rho_{W,D}$. Assume that
\[
    D^2V\le \CV^{(2)}\,\mathrm{Id}, \qquad
    D^2W\ge \cW^{(2)}\,\mathrm{Id},
\]
where $0<\cW^{(2)},\CV^{(2)}<+\infty$. With the constants just defined,
\[
    \|D^2\varphi\|_{L^\infty(B_R(0))}
    \le
    \sqrt{A_{V,W}^{d,D}(R)+B_{V,W}^{d,D}(R)}+\sqrt{B_{V,W}^{d,D}(R)}.
\]
\end{theorem}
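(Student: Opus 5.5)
We will follow the maximum-principle strategy of Caffarelli, adapted to the power $1/d$ as in \cite{Alessio_Carlier_Santa,Fathi}. By standard approximation (truncating $V,W$, mollifying, and using stability of Brenier maps), we may assume that $\varphi$ is smooth and uniformly convex, and that the second derivatives attain their extrema where needed. We will then pass to the limit, using lower semicontinuity of $\|D^2\varphi\|_{L^\infty(B_R)}$ under locally uniform convergence of potentials.

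The starting point is the Monge--Amp\`ere equation
\[
\det D^2\varphi=\frac{\Znorm{W}{D}}{\Znorm{V}{d}}\,\frac{(1+W(\nabla\varphi)/D)^{D}}{(1+V/d)^{d}}.
\]
We rewrite it as
\[
\det(D^2\varphi)^{1/d}=\kappa\,\frac{(1+W(\nabla\varphi)/D)^{D/d}}{1+V/d},
\]
and take $\log$ of both sides (or use the concavity of $\det^{1/d}$, available since $d\ge n$). We then differentiate twice in a unit direction $e$.

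Concavity gives
\[
\frac{1}{d}\,\mathrm{tr}\bigl((D^2\varphi)^{-1}D^2\varphi_{ee}\bigr)\ \ge\ \partial_{ee}\Bigl[\tfrac{D}{d}\log(1+W(\nabla\varphi)/D)-\log(1+V/d)\Bigr]+\text{(gradient terms)}.
\]
We evaluate this at a maximum point $x_0$ of $\varphi_{ee}$ over the relevant region. There the left side is $\le0$, and $\nabla\varphi_{ee}=0$ kills the cross terms. On the right side:
\begin{itemize}
\item the term $D^2W$ contributes $\gtrsim \cW^{(2)}\varphi_{ee}^2/(1+W/D)$;
\item $D^2V$ contributes $\lesssim\CV^{(2)}/(1+V/d)$;
\item the first-order pieces $|\nabla V|^2/(d+V)^2$ and $|\nabla W|^2/(D+W)^2$ have favourable sign from concavity of $\log$;
\item the residual mixed term carries the factor $(D/d-1)$ times $|\nabla W\cdot\,\cdot|^2$.
\end{itemize}
This last term is what produces $\Xi_{d,D}$. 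It vanishes when $D=d$, and when $D=+\infty$ it is reabsorbed into the $\cW^{(2)}$ term through the $\Gamma_{d,D}$ alternative.

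Using $1+V/d\ge\cVp{d}^{(0)}(R)(1+|x|^2/d)$ and $1+W/D\le\CWp{D}^{(0)}(\cdot)(1+|y|^2/D)$, together with the bounds on $|\nabla V|,|\nabla W|$, we expect to arrive at a quadratic inequality
\[
\lambda^2\le A+2\sqrt{B}\,\lambda, \qquad \lambda=\varphi_{ee}(x_0).
\]
Solving it gives $\lambda\le\sqrt{A+B}+\sqrt B$, which is exactly the claimed form.

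The remaining task is localization: we need control of $|\nabla\varphi(x)|$ for $x\in B_R$ so that $1+|\nabla\varphi|^2/D\le\mathcal G_{d,D}(R)$. We will prove this by a mass comparison following Fathi. Suppose $|\nabla\varphi(x)|>\mathcal R_{d,D}(R)$ for some $|x|\le R$. Monotonicity of $\nabla\varphi$ then forces a ball of radius $\sim s_{d,R}$ near $x$ (within $B_{10s_{d,R}}$) to be sent outside $B_{\mathcal R/3}$. Its $\rho_{V,d}$-mass is at least $\mathfrak m_{d,R}$, which is estimated via $\CVp{d}^{(0)}(10s_{d,R})$. This contradicts the definition of $\mathcal R_{d,D}(R)$ through $\Psi_{W,D}$.

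Having the gradient bound, we handle the fact that the maximum of $\varphi_{ee}$ over $B_R$ could sit on the boundary. For this we apply the argument to the global maximum of a cutoff quantity, or we work on $\mathbb{R}^n$ with potentials modified outside $B_R$, and then use the localized bounds only at $x_0$. We expect this step — making the maximum principle genuinely local while keeping constants independent of $d,D$ — to be the main obstacle. The first thing to try is to note that the global maximum point, reached via the approximation, can be compared against the local constants by the monotonicity in $p$ observed in the introduction.
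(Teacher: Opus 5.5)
The overall architecture matches the paper's: a second variation of the $1/d$-power Monge--Amp\`ere equation at a maximizer, plus Fathi's mass comparison (your growth step is essentially \cref{Local_D_growth}). However, the central computation is misdescribed, and with your bookkeeping the announced inequality $\lambda^2\le A+2\sqrt B\,\lambda$ cannot come out.

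Taking $\log$ is not interchangeable with concavity of $\det^{1/d}$. Log-concavity of $\det$ only gives $0\ge\Theta_D'(W)W_{11}\lambda^2+\Theta_D''(W)W_1^2\lambda^2-\Theta_d'(V)V_{11}-\Theta_d''(V)V_1^2$. Here the $W$-gradient term has the wrong sign, so you would need convexity of $\Theta_D\circ W$, which is false already for $W=|y|^2$ at $|y|^2>D$.

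With $H=(\Theta_D(W\circ\nabla\varphi)-\Theta_d(V))/d$, concavity of $\det^{1/d}$ (this is where $d\ge n$ is used) gives $0\ge H_{11}+H_1^2$. The extra $H_1^2$ does two things:
\begin{enumerate}
\item It makes the coefficient of $W_1^2\lambda^2$ equal to $\frac{D(D-d)}{d(D+W)^2}\ge0$. This is the only place your factor $D/d-1$ appears, and it sits on a \emph{favourable} term that is quadratic in $\lambda$.
\item It creates the cross term $-\frac{2D}{(d+V)(D+W)}V_1W_1\lambda$, present for every $d\le D$, including $D=d$.
\end{enumerate}
Since $2\sqrt B\,\lambda$ is linear in $\lambda$, it can only come from this cross term, which your sketch never mentions.

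The two entries of the min defining $\Xi_{d,D}$ are precisely the paper's two ways of controlling that cross term:
\begin{enumerate}
\item Young's inequality against $\mathbf c_W^{(2)}\lambda^2$ gives the first entry.
\item For $D>d$, one minimizes the quadratic in $W_1\lambda$ built with the favourable terms $\frac{D(D-d)}{d(D+W)^2}W_1^2\lambda^2$ and $\frac{2d}{(d+V)^2}V_1^2$. This leaves $\frac{d}{(d+V)^2}\bigl(\frac{2d-D}{D-d}\bigr)_+V_1^2$, which is the second entry $\mathbf c_W^{(2)}\Gamma_{d,D}$.
\end{enumerate}
At $D=+\infty$ the same completion of the square yields $(1+V/d)W_{11}\lambda^2\le V_{11}$, hence $\Lambda=1$ and $\Xi=0$. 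Your claim that the relevant term vanishes at $D=d$ would force $B=0$ there. That contradicts $\Xi_{d,d}=\mathbf C^{(1)}_{W,d}(\mathcal R_{d,d}(R))/\mathbf C^{(0)}_{W,d}(\mathcal R_{d,d}(R))$, which is generically nonzero because $\Gamma_{d,d}=+\infty$.

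The second gap is the localization, which you leave open. The remedies you suggest do not work:
\begin{enumerate}
\item Modifying $V,W$ outside $B_R(0)$ changes the Brenier map inside $B_R(0)$.
\item A cutoff produces derivative-of-cutoff terms that are absent from the stated constants.
\item Monotonicity in $p$ concerns the parameter, not the position of the maximizer. If the global maximum sits at $\bar x\notin B_R(0)$, nothing in $\mathbf c^{(0)}_{V,d}(R)$ controls $d+V(\bar x)$.
\end{enumerate}

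The paper instead evaluates the inequality at a maximizer $\bar x\in B_R(0)$ of $(x,e)\mapsto\langle D^2\varphi(x)e,e\rangle$. This gives $d+V(\bar x)\ge\mathbf c^{(0)}_{V,d}(R)(d+|\bar x|^2)$. Meanwhile \cref{Local_D_growth} puts $\bar y=\nabla\varphi(\bar x)$ in $\overline{B}_{\mathcal R_{d,D}(R)}$ with $\frac dD\frac{D+|\bar y|^2}{d+|\bar x|^2}\le\mathcal G_{d,D}(R)$. This is exactly what lets $W(\bar y)$ and $|\nabla W(\bar y)|$ be bounded through $\mathbf C^{(0)}_{W,D}(\mathcal R_{d,D}(R))$ and $\mathbf C^{(1)}_{W,D}(\mathcal R_{d,D}(R))$, which produces $\Lambda_{d,D}(R)$.

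The existence of such an interior maximizer is delegated to the incremental-ratio argument of Carlier--Figalli--Santambrogio. That argument uses second differences $\varphi(x+\epsilon e)+\varphi(x-\epsilon e)-2\varphi(x)$ and their first variation, works on $B_{R-\eta}(0)$, and uses the linear growth of \cref{Finite_D_growth} to rule out escaping maximizers. The paper only sketches that step too. Still, that mechanism, not a cutoff or a modification of the data, is what you would need to supply.
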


\hfill \\
For fixed $0<R<+\infty$ and $d<+\infty$, the constants in \cref{Intro_local_theorem} with $D<+\infty$ converge to the choices $\Lambda_{d,\infty}(R)=1$ and $\Xi_{d,\infty}(R)=0$ in \cref{Local_factors_definition}. Indeed, the convergence and tightness of $\rho_{W,D}$ as $D\to+\infty$ keep the Fathi's radius $\mathcal R_{d,D}(R)$ bounded, while $\mathcal G_{d,D}(R)\to1$, $\CWp{D}^{(0)}(\mathcal R_{d,D}(R))\to1$, and $\Xi_{d,D}(R)=0$ once $D\ge2d$. 

As we now show, to obtain the sharp Caffarelli limit, the order of limits is important: one first keeps $R<+\infty$ fixed and lets $D\to+\infty$ and then $d\to+\infty$, and only afterwards sends $R\to+\infty$. In the parameterization $R=\rho\sqrt d$, this means that $\rho=R/\sqrt d$ tends to zero during the parameter limit.

The polynomial-to-log-concave case $D=+\infty$ gives a direct interpolation between the polynomial estimates and Caffarelli's theorem. In this case, the growth factor disappears from the bound.

\begin{corollary}\label{Intro_poly_log_corollary}
Under the assumptions of \cref{Intro_local_theorem} with $D=+\infty$, for every $0<R<+\infty$ one has
\begin{equation}
    \label{eq:local poly log}
      \|D^2\varphi\|_{L^\infty(B_R(0))}
    \le
    \sqrt{\frac{\CV^{(2)}}{\cW^{(2)}\cVp{d}^{(0)}(R)}}.
\end{equation}
In particular,
\[
    \|D^2\varphi\|_{L^\infty(\R^n)}
    \le
    \sqrt{\frac{\CV^{(2)}}{\cW^{(2)}\cVp{d}^{(0)}}}.
\]

\end{corollary}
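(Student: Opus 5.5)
The corollary is a direct specialization of \cref{Intro_local_theorem} to $D=+\infty$, followed by a limit $R\to+\infty$. I will read off the constants in \cref{Local_factors_definition} and then pass to the global statement by monotonicity of the local constants in $R$.

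\emph{Step 1 (the local bound).} Fix $0<R<+\infty$ and $d\in[n,+\infty]$, and take $D=+\infty$, which is admissible since $d\le D$. By definition, $\Lambda_{d,\infty}(R)=1$ and $\Xi_{d,\infty}(R)=0$. Hence
\[
B_{V,W}^{d,\infty}(R)=0,\qquad A_{V,W}^{d,\infty}(R)=\frac{\CV^{(2)}}{\cW^{(2)}\cVp{d}^{(0)}(R)}.
\]
\cref{Intro_local_theorem} then gives
\[
\|D^2\varphi\|_{L^\infty(B_R(0))}\le \sqrt{A_{V,W}^{d,\infty}(R)}+0,
\]
which is exactly \eqref{eq:local poly log}. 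Note that Fathi's radius $\mathcal R_{d,D}(R)$ and $\mathcal G_{d,D}(R)$ never enter when $D=+\infty$. When $d=+\infty$, the convention $\cVp{\infty}^{(0)}(R)=1$ makes the bound read $\sqrt{\CV^{(2)}/\cW^{(2)}}$, which is Caffarelli's estimate \cref{Original_Caff_contraction} on each ball.

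\emph{Step 2 (the global bound).} I will let $R\to+\infty$. The quantity $\|(d+|\cdot|^2)/(d+V)\|_{L^\infty(B_R(0))}$ is nondecreasing in $R$ and converges to the same norm over $\mathbb{R}^n$: the essential supremum over an increasing exhausting family of balls converges to the essential supremum over the union. Consequently $\cVp{d}^{(0)}(R)$ is nonincreasing and satisfies $\cVp{d}^{(0)}(R)\ge \cVp{d}^{(0)}$. This gives, for every $R$,
\[
\|D^2\varphi\|_{L^\infty(B_R(0))}\le \sqrt{\frac{\CV^{(2)}}{\cW^{(2)}\cVp{d}^{(0)}}}.
\]
Since $\|D^2\varphi\|_{L^\infty(\mathbb{R}^n)}=\sup_R\|D^2\varphi\|_{L^\infty(B_R(0))}$, the global bound follows. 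If $\cVp{d}^{(0)}=0$ the right-hand side is $+\infty$ and the statement is void, consistent with the extended-sense convention used in the paper.

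\emph{Main obstacle.} The only delicate point is checking that \cref{Intro_local_theorem} really applies at the endpoint $D=+\infty$ with $d$ finite, including the mixed case $d<D=\infty$. The theorem is stated for all $d\le D$ in $[n,+\infty]$, so this is covered and no further work is needed. Everything else is bookkeeping of the conventions for $\Lambda$, $\Xi$ and $\cVp{\infty}^{(0)}$.
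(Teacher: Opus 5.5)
Your proposal is correct and follows the paper's route: with $D=+\infty$ one has $\Lambda_{d,\infty}(R)=1$ and $\Xi_{d,\infty}(R)=0$, so the localized bound reduces to $\sqrt{A^{d,\infty}_{V,W}(R)}$, and the global bound follows by letting $R\to+\infty$ using $\cVp{d}^{(0)}(R)\ge\cVp{d}^{(0)}$. The paper proves this $D=+\infty$ case of the localized theorem (and hence the corollary) directly from the endpoint inequality $(1+V/d)\,W_{11}\lambda^2\le V_{11}$ at an interior maximizer, so citing the theorem is not circular.
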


Starting from \eqref{eq:local poly log}, passing first to $d\to+\infty$ for fixed $R<+\infty$, and then to $R\to+\infty$, allows us to recover Caffarelli's contraction theorem.

\begin{corollary}\label{Intro_caffarelli_corollary}
If $f=e^{-V}$ and $g=e^{-W}$ are probability densities, with $V,W\in C_{\mathrm{loc}}^{1,1}(\mathbb{R}^n)$ and
\[
    D^2V\le \CV^{(2)}\,\mathrm{Id}, \qquad
    D^2W\ge \cW^{(2)}\,\mathrm{Id},
\]
then the Brenier map $T$ pushing $f\,dx$ forward to $g\,dy$ satisfies
\[
    \|DT\|_{L^\infty(\mathbb{R}^n)}
    \le
    \sqrt{\CV^{(2)}/\cW^{(2)}}.
\]
\end{corollary}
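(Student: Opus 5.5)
The plan is to derive \cref{Intro_caffarelli_corollary} from the polynomial-to-log-concave estimate \eqref{eq:local poly log} of \cref{Intro_poly_log_corollary} by approximation. Set $W$ as given and keep the target $g=e^{-W}=\rho_{W,\infty}$; note $\Znorm{W}{\infty}=1$. For the source, for each $d\ge n$ consider $\rho_{V,d}=\Znorm{V}{d}^{-1}(1+V/d)^{-d}$. We must ensure $V>-d$ and $\Znorm{V}{d}<+\infty$. Since $D^2V\le\CV^{(2)}\mathrm{Id}$ and $e^{-V}$ is integrable, $V$ is bounded below locally but possibly not globally by $-d$, and the tail $(1+V/d)^{-d}$ need not be integrable. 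So I would first reduce to a nice $V$. One option is to replace $V$ by $V+c$ so that $V\ge -d/2$ on a large ball; this changes nothing since $e^{-V}$ is renormalized. A cleaner option is to truncate the source: take $V_k:=V$ on $B_k$ and extend so that $V_k\ge |x|^2-C_k$ outside with $D^2V_k\le\CV^{(2)}\mathrm{Id}$ still holding. Concretely, use $V_k=\max\{V,\ \CV^{(2)}|x|^2/2-C_k\}$, whose Hessian bound is preserved in the viscosity/Alexandrov sense. This is the step needing care.

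Fix $R$. For $d$ large, by Corollary \ref{Intro_poly_log_corollary} applied to $(V_k,d)\to(W,\infty)$, the Brenier map $\nabla\varphi_{k,d}$ satisfies
\[
\|D^2\varphi_{k,d}\|_{L^\infty(B_R)}\le\sqrt{\CV^{(2)}/(\cW^{(2)}\cVp{d}^{(0)}(R))}.
\]
On the fixed ball $B_R$, $V_k$ is bounded, hence $\frac{d+|x|^2}{d+V_k(x)}\to1$ uniformly on $B_R$ as $d\to\infty$. Thus $\cVp{d}^{(0)}(R)\to1$.

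Next, I check that $\rho_{V_k,d}\to e^{-V_k}/\Znorm{V_k}{\infty}$ in $L^1$. This holds by dominated convergence, using $(1+t/d)^{-d}\le (1+t/n)^{-n}$ for $t\ge0$ together with the quadratic growth of $V_k$, since $n$-polynomial decay of $|x|^{-2n}$ is integrable. Stability of Brenier maps under weak convergence of the marginals (uniqueness plus convexity) then gives $\nabla\varphi_{k,d}\to\nabla\varphi_k$ locally uniformly, after normalizing potentials. Uniform Lipschitz bounds on $B_R$ pass to the limit, which yields $\|D^2\varphi_k\|_{L^\infty(B_R)}\le\sqrt{\CV^{(2)}/\cW^{(2)}}$. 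Letting $R\to\infty$ and then $k\to\infty$ gives the result, since $e^{-V_k}/Z\to e^{-V}$ in $L^1$ and the same stability argument applies.

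The main obstacle is the integrability of $(1+V/d)^{-d}$ and the condition $V>-d$, which the truncation handles. The truncation itself must preserve the upper Hessian bound. A maximum of two functions with $D^2\le\CV^{(2)}$ is fine only semiconcavity-wise from one side, so I would instead mollify or use a smooth max. If that fails, I would simply assume additionally $V\ge c|x|^2-C$, which covers the case where $e^{-V}$ arises from Caffarelli's setting after the standard reduction.
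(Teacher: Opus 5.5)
Your overall route is the paper's. The paper proves this corollary in one sentence: apply \eqref{eq:local poly log} (the case $D=+\infty$), let $d\to+\infty$ at fixed $R$, where $\cVp{d}^{(0)}(R)\to1$, and then let $R\to+\infty$. Your steps on $\cVp{d}^{(0)}(R)\to1$, on $L^1$ convergence of the sources, and on stability of Brenier maps fill in that sentence correctly. You are also right that a preliminary reduction is needed, and the paper passes over it. \cref{Intro_poly_log_corollary} requires $\Znorm{V}{d}<+\infty$, and this can fail for every finite $d$. For example, take $V=(n+1)\log(1+|x|^2)$ plus a normalizing constant: $D^2V\le 2(n+1)\mathrm{Id}$ and $e^{-V}$ is integrable, but $(1+V/d)^{-d}$ decays only like $(\log|x|)^{-d}$.

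\textbf{The gap: your reduction breaks the upper Hessian bound.} Let $q=\frac{\CV^{(2)}}{2}|x|^2-C_k$. Contrary to your first claim, $\max\{V,q\}$ has a convex ridge along $\{V=q\}$. Its distributional Hessian contains the positive singular part $|\nabla(q-V)|\,\nu\otimes\nu\,\mathcal H^{n-1}$ on that set, with $\nu=\nabla(q-V)/|\nabla(q-V)|$. So $D^2V_k\le\CV^{(2)}\mathrm{Id}$ fails in the Alexandrov, viscosity and distributional senses alike: a minimum of two functions with $D^2\le\CV^{(2)}\mathrm{Id}$ keeps that bound, a maximum does not. Mollifying or using a smooth maximum only spreads this singular part over a layer of width $\varepsilon$. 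That produces eigenvalues of order $|\nabla(q-V)|/\varepsilon$, not $\CV^{(2)}+o(1)$. Your fallback assumption $V\ge c|x|^2-C$ is not among the hypotheses, so as written the argument covers only a smaller class of sources.

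\textbf{The fix: perturb additively instead of taking a maximum.}

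First, $V$ is globally bounded below. Since $V-\frac{\CV^{(2)}}{2}|x|^2$ is concave, for every $x_0$ one has $V\le V(x_0)+\CV^{(2)}/2$ on the half ball $\{x_0+h:\ |h|\le1,\ \langle\nabla V(x_0),h\rangle\le0\}$. Then $\int e^{-V}=1$ gives $V(x_0)\ge\log(\omega_n/2)-\CV^{(2)}/2=:-C_0$.

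Now set $V_\delta:=V+\delta|x|^2$ with $\delta>0$. Then $D^2V_\delta\le(\CV^{(2)}+2\delta)\mathrm{Id}$ and $V_\delta\ge\delta|x|^2-C_0$. Hence for $d\ge n$ with $d>C_0$, one has $V_\delta>-d$ and $\Znorm{V_\delta}{d}<+\infty$.

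For domination, note that $d\mapsto(1+t/d)^{-d}$ is nonincreasing on $\{d>\max\{0,-t\}\}$ for every real $t$. You used this only for $t\ge0$, which misses the set where $V<0$. So $(1+V_\delta/d_0)^{-d_0}$ is an integrable majorant for all $d\ge d_0$, for any fixed $d_0\ge n$ with $d_0>C_0$.

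Run your argument with $V_\delta$ in place of $V_k$. This gives $\|DT_\delta\|_{L^\infty(\mathbb{R}^n)}\le\sqrt{(\CV^{(2)}+2\delta)/\cW^{(2)}}$ for the Brenier map $T_\delta$ from $e^{-V_\delta}/\Znorm{V_\delta}{\infty}$ to $g$. These sources converge to $e^{-V}$ in $L^1$ as $\delta\downarrow0$ by monotone convergence. One more use of stability then yields the sharp constant, and no truncation in $k$ is needed.
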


The approaches of \cite{Alessio_Carlier_Santa} and \cite{Colombo_Fathi}, which inspired \cite{Fathi}, are distinct: the latter relies on monotonicity of the optimal transport and concentration inequalities to deduce growth estimates, while the former uses the Monge-Amp\`ere equation and a maximum-principle argument. We combine these two viewpoints. After deriving the two-parameter maximum principle, we prove a local finite-parameter growth estimate by Fathi's argument and insert this explicit bound into the localized maximum-principle estimate. The endpoint $D=+\infty$ is obtained from the corresponding endpoint differential inequality, and the constants above are arranged so that the local finite-$D$ estimates converge to it for fixed $R<+\infty$.

\section[Proofs of the main estimates]{Proofs of \texorpdfstring{\cref{Intro_global_theorem} and \cref{Intro_local_theorem}}{the main estimates}}

We first prove the estimates under the additional assumption that the potentials and the Brenier potential are smooth, and that the second directional derivative whose supremum gives $\|D^2\varphi\|_{L^\infty}$ attains its maximum. This allows us to write the maximum-principle computation at the level of second derivatives. Then, at the end of the paper, we recall the argument used in \cite{Alessio_Carlier_Santa} to circumvent these issues.

We begin with the finite-parameter case $n\le d\le D<+\infty$, assume that $V>-d$ and $W>-D$, and let $\nabla\varphi$ be the Brenier map pushing $\rho_{V,d}$ forward to $\rho_{W,D}$.

Recalling that $\ThetaPar_p(t)=p\log\left(1+\frac{t}{p}\right)$ and $\ThetaPar_\infty(t)=t$, we have
\[
   \ThetaPar_p'(t)=\frac{p}{p+t}, \quad
    \ThetaPar_p''(t)=-\frac{p}{(p+t)^2},\qquad \ThetaPar_\infty'(t)=1,\quad \ThetaPar_\infty''(t)=0.
\]
The Monge-Amp\`ere equation reads
\[
    \Znorm{V}{d}^{-1}e^{-\ThetaPar_d(V)}
    =
    \Znorm{W}{D}^{-1}e^{-\ThetaPar_D(W\circ\nabla\varphi)}
    \det(D^2\varphi).
\]
Setting $F=\det^{1/d}$, we can rewrite it as
\begin{equation}\label{MA_two_parameter}
    \Bigl(\frac{\Znorm{V}{d}}{\Znorm{W}{D}}\Bigr)^{1/d}
    F(D^2\varphi)
    =
    \exp\left(\frac{\ThetaPar_D(W\circ\nabla\varphi)-\ThetaPar_d(V)}{d}\right).
\end{equation}

\begin{lemma}\label{Two_parameter_differential_inequality}
Assume that $V,W,\varphi$ are smooth and that the function
\[
    \mathbb{R}^n\times \mathbb{S}^{n-1}\ni (x,e)\mapsto \langle D^2\varphi(x)e,e\rangle
\]
attains its maximum at $(\overline x,e_1)$. Set $\overline y=\nabla\varphi(\overline x)$ and $\lambda=\varphi_{11}(\overline x)$. Then
\begin{equation}\label{Exact_second_variation}
\begin{aligned}
0 \ge{}&
\ThetaPar_D'(W)W_{11}\lambda^2-\ThetaPar_d'(V)V_{11}
-2\ThetaPar_d''(V)V_1^2 \\
&+\left(\frac{\ThetaPar_D'(W)^2}{d}+\ThetaPar_D''(W)\right)W_1^2\lambda^2
-\frac{2\ThetaPar_d'(V)\ThetaPar_D'(W)}{d}V_1W_1\lambda,
\end{aligned}
\end{equation}
where $V$ and its derivatives are evaluated at $\overline x$, while $W$ and its derivatives are evaluated at $\overline y$.

In particular, since $D\ge d$, if $W_{11}>0$ then
\begin{equation}\label{Master_second_variation}
    \begin{aligned}
    W_{11}\lambda^2
    \le{}&
    \frac{1}{1-\epsilon}\frac{d}{D}\frac{D+W}{d+V}V_{11} \\
    &+
    \frac{1}{\epsilon(1-\epsilon)}
    \frac{V_1^2W_1^2}{W_{11}(d+V)^2},
    \end{aligned}
\end{equation}
for every $\epsilon\in(0,1)$.
\end{lemma}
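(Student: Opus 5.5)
The plan is to differentiate the logarithm of the rescaled Monge--Amp\`ere equation \eqref{MA_two_parameter} twice in the direction $e_1$ and evaluate at the maximum point. The concavity of $F=\det^{1/d}$, available since $d\ge n$, gives an inequality of the right sign. The first- and second-order optimality conditions at $(\overline x,e_1)$ then remove all third-derivative terms and reduce the matrix $D^2\varphi(\overline x)$ to a single entry.

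\emph{Setup.} Set
\[
G:=\frac{\ThetaPar_D(W\circ\nabla\varphi)-\ThetaPar_d(V)}{d},
\]
so that \eqref{MA_two_parameter} reads $cF(D^2\varphi)=e^{G}$ with $c>0$ a constant. Differentiating twice in $x_1$ gives
\[
cF^{ij}\varphi_{ij11}+cF^{ij,kl}\varphi_{ij1}\varphi_{kl1}=(e^G)_{11}=e^G\,(G_{11}+G_1^2).
\]
Since $F$ is concave on positive definite matrices, the second term on the left is nonpositive. Hence $cF^{ij}\varphi_{ij11}\ge e^G(G_{11}+G_1^2)$.

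We now use the maximality of $(x,e)\mapsto\langle D^2\varphi(x)e,e\rangle$ at $(\overline x,e_1)$.
\begin{enumerate}
\item Maximality in $x$ gives $\nabla\varphi_{11}(\overline x)=0$ and $D^2\varphi_{11}(\overline x)\le0$. Since $(F^{ij})$ is positive definite, $F^{ij}\varphi_{11ij}\le0$, and therefore $G_{11}+G_1^2\le0$ at $\overline x$.
\item Maximality in $e$ on the sphere gives that $e_1$ is an eigenvector of $D^2\varphi(\overline x)$ with top eigenvalue $\lambda$. Thus $\varphi_{1k}=\lambda\delta_{1k}$, and $\lambda\ge0$ by convexity.
\end{enumerate}

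\emph{Computing the derivatives.} Using $\varphi_{k11}=\partial_k\varphi_{11}=0$ and $\varphi_{1k}=\lambda\delta_{1k}$:
\[
dG_1=\ThetaPar_D'(W)W_1\lambda-\ThetaPar_d'(V)V_1,
\]
\[
dG_{11}=\ThetaPar_D''(W)W_1^2\lambda^2+\ThetaPar_D'(W)W_{11}\lambda^2-\ThetaPar_d''(V)V_1^2-\ThetaPar_d'(V)V_{11}.
\]
Multiplying $G_{11}+G_1^2\le0$ by $d$ and expanding $dG_1^2=(dG_1)^2/d$, we note that $\ThetaPar_d'(V)^2/d=d/(d+V)^2=-\ThetaPar_d''(V)$. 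So the $V_1^2$ terms combine into $-2\ThetaPar_d''(V)V_1^2$. This yields exactly \eqref{Exact_second_variation}; this bookkeeping is the only delicate point of the first part.

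\emph{The simplified inequality.} Two terms in \eqref{Exact_second_variation} are nonnegative:
\[
\frac{\ThetaPar_D'(W)^2}{d}+\ThetaPar_D''(W)=\frac{D(D-d)}{d(D+W)^2}\ge0 \quad\text{(using } D\ge d\text{)},\qquad -2\ThetaPar_d''(V)V_1^2\ge0.
\]
Dropping them leaves
\[
\ThetaPar_D'W_{11}\lambda^2\le\ThetaPar_d'V_{11}+\frac{2}{d}\ThetaPar_d'\ThetaPar_D'\,|V_1W_1|\,\lambda.
\]
Since $W_{11}>0$, Young's inequality bounds the cross term by
\[
\epsilon\,\ThetaPar_D'W_{11}\lambda^2+\frac{\ThetaPar_d'^2\,\ThetaPar_D'\,V_1^2W_1^2}{\epsilon\, d^2\,W_{11}}.
\]
Absorbing the first piece on the left and dividing by $(1-\epsilon)\ThetaPar_D'>0$ gives \eqref{Master_second_variation}, after substituting
\[
\frac{\ThetaPar_d'}{\ThetaPar_D'}=\frac{d}{D}\,\frac{D+W}{d+V},\qquad \frac{\ThetaPar_d'^2}{d^2}=\frac{1}{(d+V)^2}.
\]

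The main step to check carefully is the sign from concavity. It must be concavity of $F$ itself, not of $\log F$, so that the $G_1^2$ term appears; that term produces both the factor $2$ in front of $\ThetaPar_d''$ and the $1/d$ cross terms.
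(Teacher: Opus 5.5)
Your proposal is correct and follows the paper's proof essentially line by line. Like the paper, you differentiate $e^{H}$ twice along $e_1$ and use the concavity and monotonicity of $\det^{1/d}$ together with the first- and second-order maximality conditions to get $H_{11}+H_1^2\le 0$; you then drop the two nonnegative terms (using $D\ge d$) and apply Young's inequality. Your explicit check that $\Theta_d'(V)^2/d=-\Theta_d''(V)$, which produces the factor $2$ in front of $\Theta_d''(V)V_1^2$, fills in a bookkeeping step the paper leaves to the reader.
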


\begin{proof}
At the maximum point, $e_1$ is an eigenvector of $D^2\varphi(\overline x)$, and hence
\[
    \nabla\varphi_{11}(\overline x)=0,\qquad
    D^2\varphi_{11}(\overline x)\le 0,\qquad
    \varphi_{i1}(\overline x)=0 \quad \text{for } i\neq 1.
\]
Let
\[
    H(x):=\frac{\ThetaPar_D(W(\nabla\varphi(x)))-\ThetaPar_d(V(x))}{d}.
\]
By \cref{MA_two_parameter}, the right-hand side is $e^H$. Differentiating twice in the $e_1$ direction and using the concavity and monotonicity of $F=\det^{1/d}$ give
\[
    0\ge (e^H)_{11}=e^H(H_{11}+H_1^2).
\]
At $\overline x$, using $\varphi_{i1}=0$ for $i\neq 1$ and $\varphi_{i11}=0$ for all $i$, this becomes exactly \cref{Exact_second_variation}.

Since
\[
    \frac{\ThetaPar_D'(W)^2}{d}+\ThetaPar_D''(W)
    =
    \frac{D(D-d)}{d(D+W)^2}\ge 0,
\]
we may drop this term, as well as the nonnegative $V_1^2$ term, to get
\[
    \ThetaPar_D'(W)W_{11}\lambda^2
    \le
    \ThetaPar_d'(V)V_{11}
    +\frac{2\ThetaPar_d'(V)\ThetaPar_D'(W)}{d}V_1W_1\lambda.
\]
After division by $\ThetaPar_D'(W)$ and using $\ThetaPar_d'(V)=d/(d+V)$, Young's inequality gives \cref{Master_second_variation}.
\end{proof}

We first prove the growth estimates needed to use \cref{Master_second_variation}. 
The next proposition is a slight reformulation of Fathi's monotonicity estimate \cite[Theorem 2.3]{Fathi}. We recall that $\mathcal R_{d,D}(R)$ and $\mathcal G_{d,D}(R)$ are defined as in  
\eqref{eq:RG}.

\begin{proposition}\label{Local_D_growth}
Let $n\le d\le D<+\infty$ and $0<R<+\infty$, and let $\nabla\varphi$ be the optimal transport map pushing $\rho_{V,d}$ forward to $\rho_{W,D}$. 
Then
\[
    |\nabla\varphi(x)|\le \mathcal R_{d,D}(R)
    \qquad \text{for a.e. }\, x\in B_R(0),
\]
and
\[
    \frac{d}{D}\frac{D+|\nabla\varphi(x)|^2}{d+|x|^2}
    \le
    \mathcal G_{d,D}(R)
    \qquad \text{for a.e. }\, x\in B_R(0).
\]
\end{proposition}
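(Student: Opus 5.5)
Since $\nabla\varphi$ is monotone, we bound $|\nabla\varphi(x)|$ for $x\in B_R(0)$ by a mass-comparison argument in the spirit of \cite[Theorem 2.3]{Fathi}. Fix $x\in B_R(0)$ at which $\nabla\varphi$ is defined, and set $y=\nabla\varphi(x)$. Suppose $|y|>r$, and write $e=y/|y|$.

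\textbf{Step 1: a cone of large mass is pushed outside $B_{r/3}(0)$.} Choose a set $S\subset B_{10s}(0)$, with $s=s_{d,R}$, of the form
\[
S=\{z\in B_{2s}(x+2s e)\text{-type region}:\ \langle z-x,e\rangle\ge c|z-x|\},
\]
that is, a cone or half-ball with vertex near $x$ in the direction $e$ and radius comparable to $s$. Its Lebesgue measure is at least $\omega_n(3s)^n$ times a harmless factor; the precise normalisation is what dictates the constant in $\mathfrak m_{d,R}$. For $z\in S$, monotonicity gives
\[
\langle \nabla\varphi(z)-y,\,z-x\rangle\ge0 .
\]
Combining this with $|y|>r$ and the cone geometry, the aim is to show $|\nabla\varphi(z)|\ge r/3$, or at least that $\nabla\varphi(z)$ lies in a half-space far from the origin. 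The vanilla monotonicity inequality only yields $\langle\nabla\varphi(z),z-x\rangle\ge\langle y,z-x\rangle$. This is why Fathi compares with a point $z$ and uses a three-point configuration: one chooses $S$ so that $\langle y,z-x\rangle\gtrsim |y|\,|z-x|$, while $|z-x|\le$ a few times $s$. One then gets $|\nabla\varphi(z)|\ge |y|\cdot c'$ with $c'=1/3$ after tuning, which explains the factor $3$ in $\mathcal R_{d,D}$.

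\textbf{Step 2: mass comparison.} On $B_{10s}(0)$ we have
\[
\rho_{V,d}\ge \Znorm{V}{d}^{-1}\bigl(\CVp{d}^{(0)}(10s)\bigr)^{-d}\left(1+\frac{100s^2}{d}\right)^{-d}.
\]
This follows from $1+V/d\le \CVp{d}^{(0)}(10s)\,(1+|z|^2/d)$. Hence
\[
\rho_{V,d}(S)\ge\mathfrak m_{d,R}
\]
(possibly with $S$ chosen so that this holds strictly, since a positive-measure slack is available). The pushforward identity then gives
\[
\Psi_{W,D}(|y|/3)\ge\rho_{W,D}\bigl(\nabla\varphi(S)\bigr)\ge \rho_{V,d}(S)\ge\mathfrak m_{d,R}.
\]
Because $\Psi_{W,D}$ is nonincreasing and this inequality is strict, it follows that $|y|/3\le\inf\{r:\Psi_{W,D}(r)\le\mathfrak m_{d,R}\}$, that is, $|y|\le\mathcal R_{d,D}(R)$. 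This holds for a.e.\ $x\in B_R(0)$.

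\textbf{Step 3: the growth ratio.} With the first bound in hand,
\[
\frac{d}{D}\,\frac{D+|y|^2}{d+|x|^2}\le \frac{D+|y|^2}{D}\cdot\frac{d}{d+|x|^2}\le 1+\frac{\mathcal R_{d,D}(R)^2}{D}=\mathcal G_{d,D}(R),
\]
using $d/(d+|x|^2)\le1$.

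\textbf{Main obstacle.} The hard part is Step 1: pinning down the geometry of $S$ so that monotonicity genuinely forces $|\nabla\varphi(z)|\ge|y|/3$ uniformly on a set of measure at least $\omega_n(3s)^n$, and doing so inside $B_{10s}$. The first thing I would try is Fathi's choice: take $S=B_{s}(x')$ with $x'=x+2s\,e$. Then one checks $\langle \nabla\varphi(z),e\rangle\ge\langle y,e\rangle\cdot(\text{const})-$ error terms, using $|x|\le R\le s$ and the fact that $s\ge\sqrt d$ makes the density lower bound dimension-free. The constants $3$, $10$ and $100$ in $\mathfrak m_{d,R}$ should then be adjusted to whatever this geometry produces.
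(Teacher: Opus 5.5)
Your architecture is the paper's. Monotonicity pushes a region of controlled $\mu_{V,d}$-mass outside $B_{|y|/3}(0)$, that mass is compared with the tail $\Psi_{W,D}(|y|/3)$, and the second bound follows from $d/(d+|x|^2)\le 1$. The gap is that Step 1, the only step with real content, is never carried out. You do not fix $S$, and you do not verify the three properties the argument needs:
$S\subset B_{10s}(0)$;
$\rho_{V,d}(S)\ge\mathfrak m_{d,R}$, for instance via $|S|\ge\omega_n(3s)^n$;
$|\nabla\varphi(z)|\ge|y|/3$ on $S$.

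Your proposed remedy, adjusting the constants $3$, $10$, $100$ in $\mathfrak m_{d,R}$ to fit whatever geometry comes out, would change the statement rather than prove it. Your first candidate, a cone-type subset of $B_{2s}(x+2se)$, has measure at most $\omega_n(2s)^n<\omega_n(3s)^n$. So the inference in Step 2, from the pointwise density bound on $B_{10s}(0)$ to $\rho_{V,d}(S)\ge\mathfrak m_{d,R}$, does not follow. No three-point configuration is involved, and estimating $\langle\nabla\varphi(z),e\rangle$ up to error terms is a detour.

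The missing choice is $S=B_{3s}(x+6se)$ with $s=s_{d,R}$. Since $|x|<R\le s$, every $z\in S$ satisfies $|z|<10s$. Hence $S\subset B_{10s}(0)$ and $|S|=\omega_n(3s)^n$, which gives $\rho_{V,d}(S)\ge\mathfrak m_{d,R}$ with the constant exactly as defined. The inequality is even strict, since $|z|<10s$ on $S$; this settles your strictness concern. For $z\in S$ one has $\langle z-x,e\rangle>3s$ and $|z-x|<9s$, so two-point monotonicity and Cauchy--Schwarz give
\[
9s\,|\nabla\varphi(z)|\ge\langle\nabla\varphi(z),z-x\rangle\ge\langle y,z-x\rangle\ge 3s\,|y|,
\]
that is, $|\nabla\varphi(z)|\ge|y|/3$. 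Your Steps 2 and 3 then go through verbatim.

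Your fallback $B_s(x+2se)$ would also work. It gives the same ratio $(2-1)/(2+1)=1/3$ and lies in $B_{4s}(0)$. Since $s^2/d\ge 1$ and $d\ge n$, and $(1+100t)/(1+16t)\ge 101/17>3$ for $t\ge 1$, one gets $(1+16s^2/d)^{-d}\ge 3^n(1+100s^2/d)^{-d}$, so its mass dominates $\mathfrak m_{d,R}$. That comparison has to be made explicitly, however, not absorbed by redefining $\mathfrak m_{d,R}$.
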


\begin{proof}
Let $\mu_{V,d}$ and $\mu_{W,D}$ denote the measures with densities $\rho_{V,d}$ and $\rho_{W,D}$, respectively, and let $s=s_{d,R}$. For $x\in B_R(0)$ and $u\in\mathbb{S}^{n-1}$, every $z\in B_{3s}(x+6su)$ satisfies $|z|\le10s$. Hence
\[
\mu_{V,d}\bigl(B_{3s}(x+6su)\bigr)
\ge
\frac{1}{\Znorm{V}{d}\bigl(\CVp{d}^{(0)}(10s)\bigr)^d}
\int_{B_{3s}(x+6su)}
\Bigl(1+\frac{|z|^2}{d}\Bigr)^{-d}\,dz \ge \mathfrak m_{d,R}.
\]
Moreover,
\[
    \mu_{W,D}\bigl(B_r(0)^c\bigr)= \Psi_{W,D}(r)
    \qquad \forall\, r\ge0.
\]
If $\nabla\varphi(x)=0$, there is nothing to prove. Otherwise, take $u=\nabla\varphi(x)/|\nabla\varphi(x)|$. The monotonicity of $\nabla\varphi$ gives
\[
    \nabla\varphi\bigl(B_{3s}(x+6su)\bigr)
    \subset B_{|\nabla\varphi(x)|/3}(0)^c.
\]
Therefore,
\[
\begin{aligned}
    \mathfrak m_{d,R}
    &\le
    \mu_{V,d}\bigl(B_{3s}(x+6su)\bigr)=
    \mu_{W,D}\Bigl(\nabla\varphi\bigl(B_{3s}(x+6su)\bigr)\Bigr)\\
    &\le
    \mu_{W,D}\bigl(B_{|\nabla\varphi(x)|/3}(0)^c\bigr)
    =
    \Psi_{W,D}(|\nabla\varphi(x)|/3),
\end{aligned}
\]
which yields
\[
    |\nabla\varphi(x)|\le
    3\inf\{r\ge0:\Psi_{W,D}(r)\le \mathfrak m_{d,R}\}
    =
    \mathcal R_{d,D}(R).
\]
The second bound follows from $d+|x|^2\ge d$.
\end{proof}

We will also need another bound on the growth of the transport map, which follows again from Fathi's argument.
For $p\in[n,+\infty)$, let
\[
    I_p:=\int_{\mathbb{R}^n}\Bigl(1+\frac{|z|^2}{p}\Bigr)^{-p}\,dz.
\]
Also, for $d\le D<+\infty$, we set
\begin{equation}\label{Finite_growth_constants}
\begin{aligned}
\mathbf{K}_{d,D,n}
&:=
3\left[
\left(\frac{\CVp{d}^{(0)}}{\cVp{d}^{(0)}}\right)^d
\left(\frac{\CWp{D}^{(0)}}{\cWp{D}^{(0)}}\right)^D
\left(\frac{5}{4}\right)^d10^{2d}3^{-n}
\left(\frac{D}{d}\right)^D
\frac{I_d}{I_D}
\right]^{\frac{1}{2D-n}},\\
\mathbf{M}_{d,D,n}
&:=
\frac{d}{D}\mathbf{K}_{d,D,n}^2.
\end{aligned}
\end{equation}

\begin{proposition}\label{Finite_D_growth}
Let $n\le d\le D<+\infty$, and let $\nabla\varphi$ be the optimal transport map pushing $\rho_{V,d}$ forward to $\rho_{W,D}$. Then, with $\mathbf{K}_{d,D,n}$ and $\mathbf{M}_{d,D,n}$ as above,
\[
    |\nabla\varphi(x)|
    \le
    \mathbf{K}_{d,D,n}\max\{|x|,\sqrt d\}
    \qquad \text{for a.e. }\, x\in\mathbb{R}^n.
\]
In particular,
\[
    D+|\nabla\varphi(x)|^2
    \le
    \frac{D}{d}\left(1+\mathbf{M}_{d,D,n}\right)(d+|x|^2)
    \qquad \text{for a.e. }\, x\in\mathbb{R}^n.
\]
\end{proposition}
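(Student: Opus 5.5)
This is Fathi's monotonicity argument from the proof of \cref{Local_D_growth}, now with the scale $s=\max\{|x|,\sqrt d\}$ depending on the point. The source mass of a ball far from the origin is estimated from below by its polynomial tail, and the target mass outside a large ball is estimated from above by its tail. The ratio of the two tails then gives linear growth through the exponent $2D-n$.

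Fix $x$ with $\nabla\varphi(x)\neq0$, set $s:=\max\{|x|,\sqrt d\}$ and $u:=\nabla\varphi(x)/|\nabla\varphi(x)|$.

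\textbf{Monotonicity.} As in the proof of \cref{Local_D_growth}, for every $z\in B_{3s}(x+6su)$ we have $\langle \nabla\varphi(z)-\nabla\varphi(x),z-x\rangle\ge0$. This gives $\nabla\varphi(z)\in B_{|\nabla\varphi(x)|/3}(0)^c$, hence
\[
\mu_{V,d}\bigl(B_{3s}(x+6su)\bigr)\le\mu_{W,D}\bigl(B_{|\nabla\varphi(x)|/3}(0)^c\bigr).
\]

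\textbf{Lower bound on the source side.} On $B_{3s}(x+6su)$ we have $|z|\le 10s$. Using $d+V\le \CVp{d}^{(0)}(d+|z|^2)$ and $d\le s^2$, we get
\[
\Bigl(1+\frac{V}{d}\Bigr)^{-d}\ge (\CVp{d}^{(0)})^{-d}\Bigl(\frac{d+100s^2}{d}\Bigr)^{-d}\ge(\CVp{d}^{(0)})^{-d}\,101^{-d}\Bigl(\frac{s^2}{d}\Bigr)^{-d}.
\]
The factor $(5/4)^d10^{2d}$ in $\mathbf K_{d,D,n}$ suggests the authors use the bound $d+100s^2\le \tfrac54\cdot 100\,s^2$ in some form, so I would adjust the constant to match. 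Multiplying by the volume $\omega_n(3s)^n$ gives
\[
\mu_{V,d}\gtrsim \frac{\omega_n 3^n s^n (d/s^2)^d}{\Znorm{V}{d}(\CVp{d}^{(0)})^{d}(125)^d}.
\]
Next, $\Znorm{V}{d}\le (\cVp{d}^{(0)})^{-d}I_d$, because $d+V\ge \cVp{d}^{(0)}(d+|z|^2)$.

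\textbf{Upper bound on the target side.} With $r=|\nabla\varphi(x)|/3$, use $\Znorm{W}{D}\ge(\CWp{D}^{(0)})^{-D}I_D$ and $d+W\ge\cWp{D}^{(0)}(D+|y|^2)$. The tail becomes
\[
\frac{(\CWp{D}^{(0)})^{D}}{(\cWp{D}^{(0)})^{D}I_D}\int_{|y|>r}\Bigl(1+\frac{|y|^2}{D}\Bigr)^{-D}dy\le\frac{(\CWp{D}^{(0)}/\cWp{D}^{(0)})^{D}}{I_D}\,D^D\,\frac{n\omega_n r^{n-2D}}{2D-n},
\]
using $(1+|y|^2/D)^{-D}\le D^D|y|^{-2D}$. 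Since $2D-n\ge n$, the factor $n/(2D-n)\le1$ can be discarded.

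\textbf{Combining.} Putting the two bounds together gives
\[
r^{2D-n}\le \Bigl(\tfrac{\CVp{d}^{(0)}}{\cVp{d}^{(0)}}\Bigr)^d\Bigl(\tfrac{\CWp{D}^{(0)}}{\cWp{D}^{(0)}}\Bigr)^D c^d 3^{-n}\frac{D^D}{d^d}\frac{I_d}{I_D}\,s^{2d-n}.
\]
Since $s\ge\sqrt d$ and $d\le D$, we have $s^{2d-n}d^{-d}\le s^{2D-n}d^{-D}$: the extra factor is $(s^2/d)^{D-d}\ge1$, so it points the right way. This gives $r\le \tfrac13\mathbf K_{d,D,n}s$, i.e.
\[
|\nabla\varphi(x)|\le \mathbf K_{d,D,n}\max\{|x|,\sqrt d\}.
\]

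\textbf{The ``in particular''.} From $|\nabla\varphi|^2\le \mathbf K^2\max\{|x|^2,d\}\le\mathbf K^2(d+|x|^2)$ we get
\[
D+|\nabla\varphi|^2\le \frac Dd(d+|x|^2)+\frac Dd\,\mathbf M_{d,D,n}(d+|x|^2).
\]
The first term uses $D\le \frac Dd(d+|x|^2)$, and the second uses $\mathbf K^2=\frac Dd\mathbf M$.

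\textbf{Main obstacle.} The delicate part is the bookkeeping of constants so that they match \eqref{Finite_growth_constants} exactly. In particular, the step where $s^{2d-n}$ has to be traded for $s^{2D-n}$ without producing a loss depending on $x$ relies on $s\ge\sqrt d$ and on pairing $d^{-d}$ with $D^D$ into $(D/d)^D$.
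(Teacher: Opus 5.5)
Your proposal is correct and follows the paper's proof essentially step by step. Both use Fathi's monotonicity inclusion on $B_{3s}(x+6su)$ with $s=\max\{|x|,\sqrt d\}$, the normalization bounds $\Znorm{V}{d}\le(\cVp{d}^{(0)})^{-d}I_d$ and $\Znorm{W}{D}\ge(\CWp{D}^{(0)})^{-D}I_D$, the polynomial tail estimate on the target side, and the trade $s^{2d-n}d^{-d}\le s^{2D-n}d^{-D}$. The deviations are harmless. Your bound $d+|z|^2\le 101s^2$ already implies the factor $125^d=(5/4)^d10^{2d}$ in $\mathbf K_{d,D,n}$, so no adjustment is needed, and you do not need $|z|\ge 2s$. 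Your polar factor $n\omega_n$, discarded via $n/(2D-n)\le 1$, is the correct form of the paper's corresponding step. One small typo: ``$d+W$'' should read $D+W$.
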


\begin{proof}
We follow the strategy of \cite[Lemma 3.6 (i)]{Fathi}. As before, $\mu_{V,d}$ and $\mu_{W,D}$ denote the measures with densities $\rho_{V,d}$ and $\rho_{W,D}$, respectively.

The definitions of the constants imply
\[
    \Znorm{V}{d}\le (\cVp{d}^{(0)})^{-d}I_d,
    \qquad
    \Znorm{W}{D}\ge (\CWp{D}^{(0)})^{-D}I_D.
\]
If $\nabla\varphi(x)=0$, there is nothing to prove; otherwise take $u=\nabla\varphi(x)/|\nabla\varphi(x)|$. Let $r_x:=\max\{|x|,\sqrt d\}$. As in the proof of \cite[Theorem 4.4]{Fathi}, every $z\in B_{3r_x}(x+6r_xu)$ satisfies
\[
    2r_x\le |z|\le 10r_x.
\]
Since $r_x\ge\sqrt d$, this gives
\[
    \frac{1}{1+\frac{|z|^2}{d}}\ge \frac{4d}{5|z|^2}.
\]
On the one hand, 
\[
\begin{aligned}
\mu_{V,d}(B_{3r_x}(x+6r_xu)) &\ge Z_{V,d}^{-1}(\CVp{d}^{(0)})^{-d} \int_{B_{3r_x}(x+6r_xu)}\Bigl(1+\frac {|z|^2}d\Bigl)^{-d}\, dz
\\
&\ge Z_{V,d}^{-1}(\mathbf{C}_{V,d}^{(0)})^{-d} \Bigl(\frac 45\Bigl)^dd^d \int_{B_{3r_x(x+6r_xu)}}|z|^{-2d}\, dz \\
&\ge
\left(\frac{\cVp{d}^{(0)}}{\CVp{d}^{(0)}}\right)^d
\Bigl(\frac{4}{5}\Bigr)^d
\frac{d^d r_x^{n-2d}}{I_d}
10^{-2d}3^n\omega_n\\
&\ge
\left(\frac{\cVp{d}^{(0)}}{\CVp{d}^{(0)}}\right)^d
\Bigl(\frac{4}{5}\Bigr)^d
\frac{d^D r_x^{n-2D}}{I_d}
10^{-2d}3^n\omega_n.
\end{aligned}
\]
On the other hand, 
\[
\begin{aligned}
\mu_{W,D}(|y|\ge r)
&\le
\left(\frac{\CWp{D}^{(0)}}{\cWp{D}^{(0)}}\right)^D
\frac{1}{I_D}
\int_{B_r(0)^c}\Bigl(1+\frac{|y|^2}{D}\Bigr)^{-D}\,dy \\
&\le \left(\frac{\CWp{D}^{(0)}}{\cWp{D}^{(0)}}\right)^D
\frac{1}{I_D} D^D \omega_n \int_{r}^{+\infty} t^{n-2D-1}\, dt \\
&\le
\left(\frac{\CWp{D}^{(0)}}{\cWp{D}^{(0)}}\right)^D
\frac{D^{D}\omega_n}{(2D-n)I_D}r^{n-2D}
=:\psi(r).
\end{aligned}
\]

By \cite[Theorem 2.3]{Fathi},
\begin{multline}
|\nabla\varphi(x)|
\le
3\psi^{-1}\bigl(\mu_{V,d}(B_{3r_x}(x+6r_xu))\bigr)\\
\le
3\left[
\left(\frac{\CVp{d}^{(0)}}{\cVp{d}^{(0)}}\right)^d
\left(\frac{\CWp{D}^{(0)}}{\cWp{D}^{(0)}}\right)^D
\left(\frac{5}{4}\right)^d10^{2d}3^{-n}
\left(\frac{D}{d}\right)^D 
\frac{I_d}{(2D-n)I_D}
\right]^{\frac{1}{2D-n}}
r_x \le
\mathbf{K}_{d,D,n}r_x.
\end{multline}
The final displayed growth bound follows immediately.
\end{proof}

To prove our Theorem~\ref{Intro_global_theorem}, we will use the following uniform bound:
\begin{equation}\label{Uniform_M_bound}
    1+\mathbf M_{d,D,n}\le \mathbf M_{\mathrm{glob}}
    \qquad \forall\, n\le d\le D<+\infty.
\end{equation}
To prove its validity, we observe that, by the monotonicity observation in the introduction,
\[
    \frac{\CVp{d}^{(0)}}{\cVp{d}^{(0)}}\le \mathfrak q_V,
    \qquad
    \frac{\CWp{D}^{(0)}}{\cWp{D}^{(0)}}\le \mathfrak q_W.
\]
In addition, the corresponding exponents are bounded by $2$, and
\[
    \frac dD\left(\frac Dd\right)^{\frac{2D}{2D-n}}\le 2.
\]
Furthermore, we claim that
\[
    9\left[\left(\frac{5}{4}\right)^d10^{2d}3^{-n}\right]^{\frac{2}{2D-n}}
    \le
    9\left[125^D3^{-n}\right]^{\frac{2}{2D-n}}
    \le
    15625.
\]
Indeed, writing $\tau=n/D$, the logarithm of the middle term is
\[
    \log 9+\frac{2\log 125-2\tau\log 3}{2-\tau},
\]
which increases for $0<\tau\le1$ and is maximal at $\tau=1$.
Moreover, the function $p\mapsto I_p$ decreases by pointwise monotonicity of the integrand, while
\[
    I_n\le 2\omega_n n^{n/2},
    \qquad
    I_D\ge e^{-n}\omega_n n^{n/2}.
\]
Thus,
\[
    \left(\frac{I_d}{I_D}\right)^{\frac{2}{2D-n}}
    \le 4e^2,
\]
and therefore
\[
    \mathbf M_{d,D,n}
    \le
    125000e^2\,\mathfrak q_V^2\mathfrak q_W^2
    <
    924000\, \mathfrak q_V^2\mathfrak q_W^2 ,
\]
which implies \cref{Uniform_M_bound}.

We next prove a maximum-principle estimate for the Hessian of $\varphi$ inside a ball $B_R$ under a growth bound on its gradient.

\begin{proposition}\label{Conditional_finite_D}
Let $n\le d\le D<+\infty$ and $0<R\le+\infty$. Assume that $V>-d$, $W>-D$, and
\[
    D^2V\le \CV^{(2)}\,\mathrm{Id}, \qquad
    D^2W\ge \cW^{(2)}\,\mathrm{Id},
\]
with $0<\cW^{(2)},\CV^{(2)}<+\infty$. Let $\nabla\varphi$ be the optimal transport map pushing $\rho_{V,d}$ forward to $\rho_{W,D}$. Assume that, for some $\mathsf R_R,\mathsf G_R\in[0,+\infty)$,
\begin{equation}\label{Growth_assumption_general}
    |\nabla\varphi(x)|\le \mathsf R_R,
    \qquad
    \frac{d}{D}\frac{D+|\nabla\varphi(x)|^2}{d+|x|^2}\le \mathsf G_R
    \qquad \text{for a.e. }\, x\in B_R(0).
\end{equation}
Then
\begin{equation}\label{Finite_D_local_bound}
    \|D^2\varphi\|_{L^\infty(B_R(0))}
    \le
    \sqrt{\widetilde A_{V,W}^{d,D}(R)+\widetilde B_{V,W}^{d,D}(R)}+\sqrt{\widetilde B_{V,W}^{d,D}(R)},
\end{equation}
where, with $\Gamma_{d,D}$ as in \cref{Gamma_definition}, we set
\[
\widetilde\Lambda_R
:=
\CWp{D}^{(0)}(\mathsf R_R)\mathsf G_R,\qquad
\widetilde\Xi_R
:=
\min\left\{
\frac{D}{d}\,
\frac{\CWp{D}^{(1)}(\mathsf R_R)}
{\CWp{D}^{(0)}(\mathsf R_R)},
\cW^{(2)}\Gamma_{d,D}
\right\}
\]
\[
\widetilde A_{V,W}^{d,D}(R)
=
    \frac{\CV^{(2)}\widetilde\Lambda_R}{\cW^{(2)}\cVp{d}^{(0)}(R)},\qquad
\widetilde B_{V,W}^{d,D}(R)
=
    \frac{4\CVp{d}^{(1)}\widetilde\Lambda_R\widetilde\Xi_R}
    {(\cW^{(2)})^2(\cVp{d}^{(0)}(R))^2}.
\]
\end{proposition}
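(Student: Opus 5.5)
We work under the standing smoothness assumption, namely that $(x,e)\mapsto\langle D^2\varphi(x)e,e\rangle$ attains its maximum over $B_R(0)\times\mathbb S^{n-1}$ at some interior point $(\overline x,e_1)$. Write $\lambda=\varphi_{11}(\overline x)$, which equals $\|D^2\varphi\|_{L^\infty(B_R(0))}$, and $\overline y=\nabla\varphi(\overline x)$. Since only a local maximum in $x$ is used, \cref{Two_parameter_differential_inequality} applies at $\overline x$. Because $W_{11}\ge\cW^{(2)}>0$, inequality \cref{Master_second_variation} is available. The plan is to bound each coefficient in it by the quantities in the statement and then optimize in $\epsilon$.

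\textbf{Step 1 (zeroth-order factor).} By the definition of the local constants, $V(\overline x)+d\ge\cVp{d}^{(0)}(R)\,(d+|\overline x|^2)$, since $\overline x\in B_R(0)$. Likewise $W(\overline y)+D\le\CWp{D}^{(0)}(\mathsf R_R)\,(D+|\overline y|^2)$, since $|\overline y|\le\mathsf R_R$ by \cref{Growth_assumption_general} (the a.e. bound extends everywhere by continuity). Combining these with the second bound in \cref{Growth_assumption_general} gives
\[
\frac dD\frac{D+W}{d+V}\le\frac{\CWp{D}^{(0)}(\mathsf R_R)\mathsf G_R}{\cVp{d}^{(0)}(R)}=\frac{\widetilde\Lambda_R}{\cVp{d}^{(0)}(R)}.
\]
Using also $V_{11}\le\CV^{(2)}$, the first term of \cref{Master_second_variation} is at most $\frac{1}{1-\epsilon}\cW^{(2)}\widetilde A$ after dividing everything by $W_{11}$.

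\textbf{Step 2 (cross term).} We need to bound $V_1^2W_1^2/(W_{11}^2(d+V)^2)$. We have $V_1^2\le\CVp{d}^{(1)}(\sqrt d+|\overline x|)^2\le2\CVp{d}^{(1)}(d+|\overline x|^2)$, and $(d+V)^{-1}\le(\cVp{d}^{(0)}(R))^{-1}(d+|\overline x|^2)^{-1}$. We then bound the factor $W_1^2/W_{11}$ in two ways, corresponding to the two entries of the minimum in $\widetilde\Xi_R$.

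In the first way, $W_1^2\le2\CWp{D}^{(1)}(\mathsf R_R)(D+|\overline y|^2)$. Together with the growth bound and $W_{11}\ge\cW^{(2)}$, this produces $\frac Dd\frac{\CWp{D}^{(1)}}{\CWp{D}^{(0)}}\cdot\frac{\widetilde\Lambda_R}{\cVp{d}^{(0)}(R)}$, after inserting a factor $\CWp{D}^{(0)}/\CWp{D}^{(0)}$.

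In the second way, we keep the term $\frac{D(D-d)}{d(D+W)^2}W_1^2\lambda^2$ that was dropped in the proof of \cref{Two_parameter_differential_inequality}. We absorb the cross term by Young's inequality against it instead of against $W_{11}\lambda^2$, which yields the factor $\Gamma_{d,D}$. The identity $\frac{2d-D}{D-d}$ should arise from balancing the $\frac{D}{(D+W)}$ weights, and when $D\ge2d$ the cross term is absorbed entirely. This second route is the step I expect to be the main obstacle: the constants must be arranged so that exactly $\cW^{(2)}\Gamma_{d,D}$ appears, presumably by redoing \cref{Exact_second_variation} with a weighted Young inequality instead of using the already simplified \cref{Master_second_variation}. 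Since each route yields a valid bound, we may take the minimum.

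\textbf{Step 3 (conclusion).} Dividing by $\cW^{(2)}$, we obtain an inequality of the form $\lambda^2\le\frac{1}{1-\epsilon}\widetilde A+\frac{1}{\epsilon(1-\epsilon)}\frac{\widetilde B}{4}\cdot c$, with the normalization $4$ chosen to match. It is cleaner to go back one step before Young's inequality and use the quadratic inequality $\lambda^2\le\widetilde A+2\sqrt{\widetilde B}\,\lambda$, which follows from the cross-term bound $\frac{2\ThetaPar_d'\ThetaPar_D'}{d}|V_1W_1|\lambda$ by Cauchy--Schwarz together with the estimates above. Solving this quadratic gives $\lambda\le\sqrt{\widetilde B}+\sqrt{\widetilde A+\widetilde B}$, which is exactly \cref{Finite_D_local_bound}. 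The case $R=+\infty$ is identical, with global constants in place of local ones.
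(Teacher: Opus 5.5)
Your treatment of the first entry of the minimum in $\widetilde\Xi_R$ is correct and is essentially the paper's argument. Step~1 and the bound $V_1^2W_1^2/(d+V)^2\le 4\CVp{d}^{(1)}\CWp{D}^{(1)}(\mathsf R_R)\frac Dd\mathsf G_R/(\cVp{d}^{(0)}(R))^2$ give $\lambda^2\le\widetilde A+2\sqrt{B_y}\,\lambda$, and solving this quadratic is equivalent to the paper's optimization in $\epsilon$.

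The gap is the second entry, $\cW^{(2)}\Gamma_{d,D}$. You leave it open, and your sketch points in the wrong direction. Suppose you absorb the cross term $\frac{2D}{(d+V)(D+W)}V_1W_1\lambda$ only against the dropped term $\frac{D(D-d)}{d(D+W)^2}W_1^2\lambda^2$. The optimal Young inequality then leaves the remainder $\frac{dD}{(D-d)(d+V)^2}V_1^2$, whose coefficient $\frac{D}{D-d}$ exceeds $1$. Without a further nonnegative $V_1^2$ term, no reweighting turns this into $\bigl(\frac{2d-D}{D-d}\bigr)_+$, so the cross term is never ``absorbed entirely'' for $D\ge 2d$. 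This is not cosmetic. The case $D\ge2d$ is where $\widetilde B=0$, and the paper later uses exactly this to bound $B^{d,D}_{V,W}$ uniformly in the global theorem.

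The missing idea is to keep the other nonnegative term discarded in the proof of Lemma~\ref{Two_parameter_differential_inequality}, namely $-2\ThetaPar_d''(V)V_1^2=\frac{2d}{(d+V)^2}V_1^2$. For $D>d$, minimize in $z=W_1\lambda$ the quadratic formed by the $W_1^2\lambda^2$ term and the cross term in \eqref{Exact_second_variation}, and add this $V_1^2$ term. This gives
\[
\ThetaPar_D'(W)W_{11}\lambda^2\le\ThetaPar_d'(V)V_{11}-\frac{d}{(d+V)^2}\Bigl(2-\frac{D}{D-d}\Bigr)V_1^2
\le\ThetaPar_d'(V)V_{11}+\frac{d}{(d+V)^2}\Bigl(\frac{2d-D}{D-d}\Bigr)_+V_1^2 ,
\]
which is where $\Gamma_{d,D}$ comes from.

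This branch contains no term linear in $\lambda$, so your Step~3 (Cauchy--Schwarz on the cross term giving $2\sqrt{\widetilde B}\,\lambda$) does not cover it. Instead:
\begin{enumerate}
\item Divide by $\ThetaPar_D'(W)W_{11}$.
\item Use your Step~1 bound together with $V_1^2/(d+V)\le 2\CVp{d}^{(1)}/\cVp{d}^{(0)}(R)$. This yields $\lambda^2\le\widetilde A+B_c$ with $B_c=4\CVp{d}^{(1)}\widetilde\Lambda_R\Gamma_{d,D}/\bigl(\cW^{(2)}(\cVp{d}^{(0)}(R))^2\bigr)$, and hence $\lambda\le\sqrt{\widetilde A+B_c}+\sqrt{B_c}$.
\item Combine the two branches using that $B\mapsto\sqrt{\widetilde A+B}+\sqrt B$ is increasing, which gives the bound with $\widetilde B=\min\{B_y,B_c\}$.
\end{enumerate}
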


\begin{proof}
As discussed above, we prove the result under the assumption that there exists an interior  maximum point $(\overline x,e_1)$ with $\overline x\in B_R(0)$. This assumption can be removed by using the localized incremental-ratio argument described at the end of the paper.

Set $\overline y=\nabla\varphi(\overline x)$. By \cref{Master_second_variation} and the Hessian bounds,
\[
\cW^{(2)}\varphi_{11}(\overline x)^2
\le
\frac{\CV^{(2)}}{1-\epsilon}
\frac{d(D+W(\overline y))}{D(d+V(\overline x))} 
+
\frac{1}{\epsilon(1-\epsilon)\cW^{(2)}}
\frac{V_1(\overline x)^2W_1(\overline y)^2}{(d+V(\overline x))^2}.
\]
Using the definitions of the constants and the growth assumption \cref{Growth_assumption_general}, we have
\[
    \frac{d}{D}\frac{D+W(\overline y)}{d+V(\overline x)}
    \le
    \frac{\CWp{D}^{(0)}(\mathsf R_R)\mathsf G_R}{\cVp{d}^{(0)}(R)}
\]
and
\[
    \frac{V_1(\overline x)^2W_1(\overline y)^2}{(d+V(\overline x))^2}
    \le
    \frac{4\CVp{d}^{(1)}\CWp{D}^{(1)}(\mathsf R_R)\frac{D}{d}\mathsf G_R}{(\cVp{d}^{(0)}(R))^2}.
\]
Therefore, with
\[
    B_y:=
    \frac{4\CVp{d}^{(1)}\CWp{D}^{(1)}(\mathsf R_R)\frac{D}{d}\mathsf G_R}
    {(\cW^{(2)})^2(\cVp{d}^{(0)}(R))^2},
\]
we have
\[
    \begin{aligned}
    \varphi_{11}(\overline x)^2
    \le{}&
    \frac{\widetilde A_{V,W}^{d,D}(R)}{1-\epsilon}
    +
    \frac{B_y}{\epsilon(1-\epsilon)}.
    \end{aligned}
\]
Optimizing in $\epsilon\in(0,1)$ gives the desired estimate with $B_y$ in place of $\widetilde B_{V,W}^{d,D}(R)$.

If $D>d$, we can instead use the positive terms $W_1^2\lambda^2$ and $V_1^2$ in \cref{Exact_second_variation}. Minimizing the resulting quadratic polynomial in $W_1\lambda$ gives
\[
    \ThetaPar_D'(W)W_{11}\lambda^2
    \le
    \ThetaPar_d'(V)V_{11}
    +
    \frac{d}{(d+V)^2}
    \left(\frac{2d-D}{D-d}\right)_+
    V_1^2.
\]

Using again the definitions of the constants and \cref{Growth_assumption_general}, this yields
\[
    \varphi_{11}(\overline x)^2
    \le
    \widetilde A_{V,W}^{d,D}(R)
    +
    B_c,
\]
where
\[
    B_c:=
    \frac{4\CVp{d}^{(1)}\CWp{D}^{(0)}(\mathsf R_R)\mathsf G_R\Gamma_{d,D}}
    {\cW^{(2)}(\cVp{d}^{(0)}(R))^2}.
\]
For $D=d$, we use the convention $B_c=+\infty$. Since the function
\[
    B\mapsto \sqrt{\widetilde A_{V,W}^{d,D}(R)+B}+\sqrt B
\]
is increasing, the two bounds give \cref{Finite_D_local_bound} with $B=\min\{B_y,B_c\}$, namely with $\widetilde B_{V,W}^{d,D}(R)$.
\end{proof}

We can now prove all the results stated in the introduction.

When $0<R<+\infty$, Proposition \ref{Local_D_growth} gives \cref{Growth_assumption_general} with
\[
    \mathsf R_R=\mathcal R_{d,D}(R),
    \qquad
    \mathsf G_R=\mathcal G_{d,D}(R).
\]
Thus, Proposition \ref{Conditional_finite_D} proves \cref{Intro_local_theorem} when $D<+\infty$. If $R=+\infty$, \cref{Finite_D_growth} gives \cref{Growth_assumption_general} with
\[
    \mathsf R_\infty=+\infty,
    \qquad
    \mathsf G_\infty=1+\mathbf{M}_{d,D}.
\]
Applying Proposition \ref{Conditional_finite_D} with $R=+\infty$ gives the sharper finite-parameter estimate
\[
    \|D^2\varphi\|_{L^\infty(\mathbb{R}^n)}
    \le
    \sqrt{A_{V,W}^{d,D}+B_{V,W}^{d,D}}+\sqrt{B_{V,W}^{d,D}},
\]
where
\[
\begin{aligned}
A_{V,W}^{d,D}
&:=
\frac{\CV^{(2)}\CWp{D}^{(0)}\, (1+\mathbf M_{d,D})}
{\cW^{(2)}\cVp{d}^{(0)}},\\
B_{V,W}^{d,D}
&:=
\frac{4\CVp{d}^{(1)}}{(\cW^{(2)})^2(\cVp{d}^{(0)})^2}
\min\left\{
\CWp{D}^{(1)}\frac{D}{d}(1+\mathbf M_{d,D}),
\cW^{(2)}\CWp{D}^{(0)}\, (1+\mathbf M_{d,D})\Gamma_{d,D}
\right\}.
\end{aligned}
\]
By \cref{Uniform_M_bound}, $A_{V,W}^{d,D}\le A_{\mathrm{glob}}$. Moreover, if $D<2d$ then $D/d\le2$, while if $D\ge2d$, the second term in the minimum above is zero. Hence, $B_{V,W}^{d,D}\le B_{\mathrm{glob}}$ and this proves \cref{Intro_global_theorem} when $D<+\infty$. By approximation, the result holds also in the limiting cases when $d$ or $D$ are equal to $+\infty$.

\medskip

For the endpoint case $D=+\infty$ and $d<+\infty$, the same second-variation computation, as in \cref{Exact_second_variation}, gives:
\[
\begin{aligned}
0&\ge W_{11}\lambda^2-\frac{d}{d+V}V_{11}
+\frac{2d}{(d+V)^2}V_1^2 +\frac{1}{d}W_1^2\lambda^2
-\frac{2}{d+V}V_1W_1\lambda\\
&=W_{11}\lambda^2-\frac{d}{d+V}V_{11}
+\frac{d}{(d+V)^2}V_1^2+\frac{1}{d}\left(W_1\lambda-\frac{d}{d+V}V_1\right)^2,
\end{aligned}
\]
so, in particular,
\begin{equation}\label{Endpoint_second_variation}
    \Bigl(1+\frac{V}{d}\Bigr)W_{11}\lambda^2\le V_{11}.
\end{equation}
At an interior maximizer $(\tilde x,e_1)$ with $\tilde x\in B_R(0)$, this gives
\[
    1+\frac{V(\tilde x)}{d}
    =
    \frac{d+V(\tilde x)}{d}
    \ge
    \cVp{d}^{(0)}(R)\frac{d+|\tilde x|^2}{d}
    \ge
    \cVp{d}^{(0)}(R),
\]
and therefore
\[
    \cVp{d}^{(0)}(R)\cW^{(2)}\varphi_{11}(\tilde x)^2
    \le
    \CV^{(2)}.
\]
This is the estimate in \cref{Intro_local_theorem} when $D=+\infty$,
and shows the validity of
\eqref{eq:local poly log}.
Letting first $R\to+\infty$ proves Corollary~\ref{Intro_poly_log_corollary}.
Taking instead first the limit as $d\to +\infty$ and then $R \to +\infty$, recovers Caffarelli's contraction theorem as stated in Corollary~\ref{Intro_caffarelli_corollary}.

\bigskip

Following \cite{Alessio_Carlier_Santa}, we  conclude this paper by  recalling the standard argument that turns the preceding smooth maximum-principle computation into a proof under the stated hypotheses. By regularizing $V$ and $W$, one may first assume that $V,W\in C_{\mathrm{loc}}^{2,\alpha}(\mathbb{R}^n)$, so that the Brenier potential is $C^4$ on the region under consideration. The estimates obtained above are stable under this regularization, and the constants converge back to the original ones as the regularizing parameter goes to zero.

For $\epsilon>0$ and $e\in\mathbb{S}^{n-1}$, set
\[
    \varphi^\epsilon(x,e)
    :=
    \varphi(x+\epsilon e)+\varphi(x-\epsilon e)-2\varphi(x).
\]
After the usual compact truncation of the target measure, $\varphi^\epsilon(\cdot,e)$ tends to $0$ at infinity, uniformly in $e$, and therefore attains its maximum at some point $(\overline x_\epsilon,\overline e_\epsilon)$. The first variation in $x$ and $e$ gives
\[
    \nabla\varphi(\overline x_\epsilon\pm\epsilon\overline e_\epsilon)
    =
    \nabla\varphi(\overline x_\epsilon)\pm\delta_\epsilon \overline e_\epsilon,
\]
where
\[
    \delta_\epsilon
    =
    \frac{1}{2}\left\langle
    \nabla\varphi(\overline x_\epsilon+\epsilon\overline e_\epsilon)
    -
    \nabla\varphi(\overline x_\epsilon-\epsilon\overline e_\epsilon),
    \overline e_\epsilon
    \right\rangle.
\]
By convexity,
\begin{equation}\label{Incremental_ratio_bound}
    \sup_{\mathbb{R}^n\times\mathbb{S}^{n-1}}\varphi^\epsilon
    =
    \varphi^\epsilon(\overline x_\epsilon,\overline e_\epsilon)
    \le 2\epsilon\delta_\epsilon.
\end{equation}

The same discrete second-variation computation as in \cite{Alessio_Carlier_Santa}, applied to the Monge-Amp\`ere equation at the three points $\overline x_\epsilon$, $\overline x_\epsilon+\epsilon\overline e_\epsilon$, and $\overline x_\epsilon-\epsilon\overline e_\epsilon$, gives the incremental version of \cref{Exact_second_variation}. If, along a sequence $\epsilon\downarrow0$, the points $\overline x_\epsilon$ remain bounded, then, up to subsequences, $\overline x_\epsilon\to\overline x$ and $\overline e_\epsilon\to e_1$, and the incremental inequality converges to the maximum-principle inequality above.

It remains only to rule out escape of the maximizing points. In the finite case, this follows from \cref{Finite_D_growth}: the map has at most linear growth, so the incremental inequality at infinity contains the same coercive term as in \cite[Theorem 3.2]{Alessio_Carlier_Santa}. Consequently $\delta_\epsilon=o(\epsilon)$ if $|\overline x_\epsilon|\to+\infty$, contradicting \cref{Incremental_ratio_bound} unless $D^2\varphi\equiv0$. When $D=+\infty$, for finite $R$, one works on $B_{R-\eta}(0)$, applies the same argument with interior maxima, and lets $\eta\downarrow0$; the case $R=+\infty$ then follows by letting $R\to+\infty$. This proves the stated bounds, and the truncation and regularization parameters are then sent to their limits.

\begingroup
\emergencystretch=3em
\printbibliography
\endgroup
\end{document}

%% file: comands.tex
\usepackage[T1]{fontenc} 
\usepackage{lmodern} 
\rmfamily 
\DeclareFontShape{T1}{lmr}{b}{sc}{<->ssub*cmr/bx/sc}{}
\DeclareFontShape{T1}{lmr}{bx}{sc}{<->ssub*cmr/bx/sc}{}
\usepackage{lipsum}
\usepackage{mathtools}
\usepackage{amsmath}
\usepackage{amssymb}
\usepackage{tikz}
\usetikzlibrary{matrix,positioning,decorations.pathreplacing,calc}
\usetikzlibrary{
decorations.text,%
decorations.markings,%
shadows}
\usepackage{adjustbox}
\usepackage{graphicx} 

\usepackage{caption}
\usepackage{subcaption}

\usepackage{dsfont} 
\usepackage[format=hang]{caption}

\usepackage[normalem]{ulem}

\usepackage{bm}

\usepackage[
sorting=nty,
maxnames=99,
maxalphanames=5,
natbib=true,
sortcites]{biblatex}

\DeclareNameAlias{default}{family-given}

\graphicspath{{figures/}}

\AtEveryBibitem{
 \clearfield{url}
 \clearfield{issn}
 \clearfield{isbn}
 \clearfield{urldate}
 
 \ifentrytype{book}{
  \clearfield{pages}}{%
 }
}

\usepackage{xargs}                      
\usepackage[prependcaption,textsize=tiny,textwidth=3cm]{todonotes}
\newcommandx{\unsure}[2][1=]{\todo[linecolor=red,backgroundcolor=red!25,bordercolor=red,#1]{#2}}
\newcommandx{\change}[2][1=]{\todo[linecolor=blue,backgroundcolor=blue!25,bordercolor=blue,#1]{#2}}
\newcommandx{\info}[2][1=]{\todo[linecolor=OliveGreen,backgroundcolor=OliveGreen!25,bordercolor=OliveGreen,#1]{#2}}
\newcommandx{\improvement}[2][1=]{\todo[linecolor=black,backgroundcolor=black!25,bordercolor=black,#1]{#2}}
\newcommandx{\thiswillnotshow}[2][1=]{\todo[disable,#1]{#2}}
\usepackage{amsthm}
\usepackage[noabbrev,capitalize]{cleveref}
\crefname{equation}{}{}

\newtheorem{theorem}{Theorem}[section]
\newtheorem{lemma}[theorem]{Lemma}
\newtheorem{proposition}[theorem]{Proposition}
\newtheorem{corollary}[theorem]{Corollary}

\theoremstyle{definition}

\newtheorem{remark}[theorem]{Remark}

\crefname{assumption}{Assumption}{Assumptions}
\crefname{definition}{Definition}{Definitions}
\crefname{corollary}{Corollary}{Corollaries}
\crefname{enumi}{item}{items}

\usepackage{fancyhdr}

\fancypagestyle{plain}{%
\fancyhead[C]{}
\fancyfoot[C]{}

}
\fancypagestyle{nosection}{%
\fancyhead[CE]{}
\fancyhead[CO]{}
\fancyfoot[CE,CO]{\thepage}
}

\DeclareMathOperator{\R}{\mathbb{R}}

\renewcommand{\tilde}{\widetilde}

\renewcommand{\qedsymbol}{$\blacksquare$}

\usepackage{fancyhdr}

\fancypagestyle{plain}{%
\fancyhead[C]{}
\fancyfoot[C]{}

}
\fancypagestyle{nosection}{%
\fancyhead[CE]{}
\fancyhead[CO]{}
\fancyfoot[CE,CO]{\thepage}
}

\renewcommand{\epsilon}{\varepsilon}

\renewcommand{\tilde}{\widetilde}

\usepackage{tcolorbox}
\usetikzlibrary{tikzmark,calc}

\usepackage{enumerate}
\usepackage{upgreek}

%% file: bibliography.bib
@article {Alessio_Carlier_Santa,
    AUTHOR = {Carlier, Guillaume and Figalli, Alessio and Santambrogio,
              Filippo},
     TITLE = {On optimal transport maps between {$\frac{1}{d}$}-concave
              densities},
   JOURNAL = {Ann. Inst. H. Poincar\'e{} C Anal. Non Lin\'eaire},
  FJOURNAL = {Annales de l'Institut Henri Poincar\'e{} C. Analyse Non
              Lin\'eaire},
    VOLUME = {43},
      YEAR = {2026},
    NUMBER = {2},
     PAGES = {483--500},
      ISSN = {0294-1449,1873-1430},
   MRCLASS = {35J96 (35B65)},
  MRNUMBER = {5032965},
       DOI = {10.4171/aihpc/148},
       URL = {https://doi.org/10.4171/aihpc/148},
}

@article{Fathi,
    author = {Fathi, Max},
    title = {Growth estimates on optimal transport maps via
concentration inequalities},
    journal = {Ann. Fac. Sci. Toulouse Math.},
    year = {2026}
}

@article {Brenier,
    AUTHOR = {Brenier, Yann},
     TITLE = {Polar factorization and monotone rearrangement of
              vector-valued functions},
   JOURNAL = {Comm. Pure Appl. Math.},
  FJOURNAL = {Communications on Pure and Applied Mathematics},
    VOLUME = {44},
      YEAR = {1991},
    NUMBER = {4},
     PAGES = {375--417},
      ISSN = {0010-3640,1097-0312},
   MRCLASS = {46E40 (35Q99 46E99 49Q99)},
  MRNUMBER = {1100809},
MRREVIEWER = {Robert\ McOwen},
       DOI = {10.1002/cpa.3160440402},
       URL = {https://doi.org/10.1002/cpa.3160440402},
}

@article {McCann,
    AUTHOR = {McCann, Robert J.},
     TITLE = {Existence and uniqueness of monotone measure-preserving maps},
   JOURNAL = {Duke Math. J.},
  FJOURNAL = {Duke Mathematical Journal},
    VOLUME = {80},
      YEAR = {1995},
    NUMBER = {2},
     PAGES = {309--323},
      ISSN = {0012-7094,1547-7398},
   MRCLASS = {49Q20 (60E05)},
  MRNUMBER = {1369395},
MRREVIEWER = {Carlos\ Matr\'an},
       DOI = {10.1215/S0012-7094-95-08013-2},
       URL = {https://doi.org/10.1215/S0012-7094-95-08013-2},
}

@article {Benamou_Carlier,
    AUTHOR = {Benamou, Jean-David and Carlier, Guillaume and Cuturi, Marco
              and Nenna, Luca and Peyr\'e, Gabriel},
     TITLE = {Iterative {B}regman projections for regularized transportation
              problems},
   JOURNAL = {SIAM J. Sci. Comput.},
  FJOURNAL = {SIAM Journal on Scientific Computing},
    VOLUME = {37},
      YEAR = {2015},
    NUMBER = {2},
     PAGES = {A1111--A1138},
      ISSN = {1064-8275,1095-7197},
   MRCLASS = {49Q20 (47J25 65K05)},
  MRNUMBER = {3340204},
MRREVIEWER = {Shawn\ Xianfu\ Wang},
       DOI = {10.1137/141000439},
       URL = {https://doi.org/10.1137/141000439},
}

@article {Figalli_Maggi_Pratelli,
    AUTHOR = {Figalli, A. and Maggi, F. and Pratelli, A.},
     TITLE = {A mass transportation approach to quantitative isoperimetric
              inequalities},
   JOURNAL = {Invent. Math.},
  FJOURNAL = {Inventiones Mathematicae},
    VOLUME = {182},
      YEAR = {2010},
    NUMBER = {1},
     PAGES = {167--211},
      ISSN = {0020-9910,1432-1297},
   MRCLASS = {49Q20 (35J96)},
  MRNUMBER = {2672283},
MRREVIEWER = {Lorenzo\ Brasco},
       DOI = {10.1007/s00222-010-0261-z},
       URL = {https://doi.org/10.1007/s00222-010-0261-z},
}

@article {Ambrosio_Stra,
    AUTHOR = {Ambrosio, Luigi and Stra, Federico and Trevisan, Dario},
     TITLE = {A {PDE} approach to a 2-dimensional matching problem},
   JOURNAL = {Probab. Theory Related Fields},
  FJOURNAL = {Probability Theory and Related Fields},
    VOLUME = {173},
      YEAR = {2019},
    NUMBER = {1-2},
     PAGES = {433--477},
      ISSN = {0178-8051,1432-2064},
   MRCLASS = {60D05 (49J55 60H15)},
  MRNUMBER = {3916111},
       DOI = {10.1007/s00440-018-0837-x},
       URL = {https://doi.org/10.1007/s00440-018-0837-x},
}

@article {Caff,
    AUTHOR = {Caffarelli, Luis A.},
     TITLE = {Monotonicity properties of optimal transportation and the
              {FKG} and related inequalities},
   JOURNAL = {Comm. Math. Phys.},
  FJOURNAL = {Communications in Mathematical Physics},
    VOLUME = {214},
      YEAR = {2000},
    NUMBER = {3},
     PAGES = {547--563},
      ISSN = {0010-3616,1432-0916},
   MRCLASS = {60E15 (35R35 82B31)},
  MRNUMBER = {1800860},
MRREVIEWER = {Ludger\ R\"uschendorf},
       DOI = {10.1007/s002200000257},
       URL = {https://doi.org/10.1007/s002200000257},
}

@article {Kim_Milman,
    AUTHOR = {Kim, Young-Heon and Milman, Emanuel},
     TITLE = {A generalization of {C}affarelli's contraction theorem via
              (reverse) heat flow},
   JOURNAL = {Math. Ann.},
  FJOURNAL = {Mathematische Annalen},
    VOLUME = {354},
      YEAR = {2012},
    NUMBER = {3},
     PAGES = {827--862},
      ISSN = {0025-5831,1432-1807},
   MRCLASS = {35K20 (35B50 37C10)},
  MRNUMBER = {2983070},
MRREVIEWER = {Daniela\ Ro\c su},
       DOI = {10.1007/s00208-011-0749-x},
       URL = {https://doi.org/10.1007/s00208-011-0749-x},
}

@article {Jhaveri_Colombo,
    AUTHOR = {Colombo, Maria and Figalli, Alessio and Jhaveri, Yash},
     TITLE = {Lipschitz changes of variables between perturbations of
              log-concave measures},
   JOURNAL = {Ann. Sc. Norm. Super. Pisa Cl. Sci. (5)},
  FJOURNAL = {Annali della Scuola Normale Superiore di Pisa. Classe di
              Scienze. Serie V},
    VOLUME = {17},
      YEAR = {2017},
    NUMBER = {4},
     PAGES = {1491--1519},
      ISSN = {0391-173X,2036-2145},
   MRCLASS = {49Q20 (26D10 35J96)},
  MRNUMBER = {3752535},
MRREVIEWER = {Nunzia\ Gavitone},
}

@article {Vald,
    AUTHOR = {Valdimarsson, Stef\'an Ingi},
     TITLE = {On the {H}essian of the optimal transport potential},
   JOURNAL = {Ann. Sc. Norm. Super. Pisa Cl. Sci. (5)},
  FJOURNAL = {Annali della Scuola Normale Superiore di Pisa. Classe di
              Scienze. Serie V},
    VOLUME = {6},
      YEAR = {2007},
    NUMBER = {3},
     PAGES = {441--456},
      ISSN = {0391-173X,2036-2145},
   MRCLASS = {49Q20 (28A35)},
  MRNUMBER = {2370268},
}

@article {Colombo_Fathi,
    AUTHOR = {Colombo, Maria and Fathi, Max},
     TITLE = {Bounds on optimal transport maps onto log-concave measures},
   JOURNAL = {J. Differential Equations},
  FJOURNAL = {Journal of Differential Equations},
    VOLUME = {271},
      YEAR = {2021},
     PAGES = {1007--1022},
      ISSN = {0022-0396,1090-2732},
   MRCLASS = {49Q22 (46N10 60A10)},
  MRNUMBER = {4154935},
MRREVIEWER = {Ugo\ C.\ Bessi},
       DOI = {10.1016/j.jde.2020.09.032},
       URL = {https://doi.org/10.1016/j.jde.2020.09.032},
}

@article {DePHilippis_Shenfeld,
    AUTHOR = {De Philippis, Guido and Shenfeld, Yair},
     TITLE = {Optimal transport maps, majorization, and log-subharmonic
              measures},
   JOURNAL = {Ann. H. Lebesgue},
  FJOURNAL = {Annales Henri Lebesgue},
    VOLUME = {8},
      YEAR = {2025},
     PAGES = {925--963},
      ISSN = {2644-9463},
   MRCLASS = {49Q22 (39B62)},
  MRNUMBER = {5005388},
       DOI = {10.5802/ahl.251},
       URL = {https://doi.org/10.5802/ahl.251},
}

@unpublished{Kolesnikov,
    author = {Kolesnikov, A. V.},
    title = {Mass transportation and contractions},
    YEAR = {2011}, 
    JOURNAL={arXiv},
    EPRINT= {1103.1479},
    
}
